\documentclass[a4paper,12pt]{amsart}

\usepackage{amsmath,amssymb,amsfonts,latexsym}

\newtheorem{theorem}{Theorem}[section]

\newtheorem{prop}[theorem]{Proposition}
\newtheorem{eg}[theorem]{Example}
\newtheorem{defn}[theorem]{Definition}
\newtheorem{remark}[theorem]{Remark}

\usepackage{enumerate}

\begin{document}

\title[Tensor frames]{Frame related sequences in tensor product of Hilbert spaces}

\author{Hemalatha M}
\address{Department of Mathematical and Computational Sciences, National Institute of Technology Karnataka, Mangaluru, India. 575025. }
\email{hemapadma28@gmail.com}


\begin{abstract}
We study frames and related sequences in tensor products of separable Hilbert spaces from an algebraic perspective. For sequences $\{f_n\}_{n\in\mathbb{N}} \subset H_1$ and $\{g_m\}_{m\in\mathbb{N}} \subset H_2$, we consider the tensor sequence $\{f_n \otimes g_m\}_{n,m\in\mathbb{N}}$ in $H_1 \otimes H_2$. We characterize the lower semi-frame and Riesz--Fischer properties of tensor sequences in terms of the corresponding properties of the component sequences. These results extend to infinite tensor products. In addition, we study the action of tensor sums of operators on frame-related sequences in tensor product Hilbert spaces.
\vspace{2mm}

\noindent\textsc{2010 Mathematics Subject Classification.} 42C15, 47A05, 46B15, 40A05.

\vspace{2mm}

\noindent\textsc{Keywords.} Semi-frame, lower semi-frame, Riesz-Fischer sequence, tensor product.

\end{abstract}

\maketitle

 \section{Introduction}
The theory of frames in Hilbert spaces generalizes orthonormal bases and plays an important role in modern functional analysis. Frames were introduced by Duffin and Schaeffer (1952) in the study of nonharmonic Fourier series and have since found applications in signal processing, wavelet theory, sampling theory, quantum information, and operator theory. Frames provide stable and redundant representations of vectors in Hilbert spaces. Related concepts such as Bessel sequences, Riesz bases, lower semi-frames, and Riesz–Fischer sequences extend frame theory to settings where redundancy or completeness conditions are weakened. The operator-theoretic characterization of these sequences through analysis and synthesis operators has proved useful. For further background on frames and related sequences, see \cite{1,2, 2011, 2002, 7, Dsum, hema}

The study of frames in tensor products of Hilbert spaces arises naturally in multivariate harmonic analysis, quantum mechanics, and the theory of composite systems. If $H_1$ and $H_2$ are separable Hilbert spaces, for sequences $\{f_n\}_{n \in \mathbb{N}} \subset H_1$ and $\{g_m\}_{m \in \mathbb{N}} \subset H_2$, one may construct the tensor sequence $\{f_n \otimes g_m\}_{n,m \in \mathbb{N}} \subset H_1 \otimes H_2.$ Earlier work has shown that frame-related properties, such as completeness and Bessel conditions, behave well under tensor formation with appropriate assumptions (see \cite{Wiedmann, tensor dual}). 

The aim of this work is to study frame-related sequences in finite and infinite tensor products of Hilbert spaces. We establish conditions under which the lower semi-frame property, and the Riesz-Fischer property transfer from component sequences to their tensor sequences. These results are obtained using operator-theoretic methods and structural properties of tensor products. We also analyze the action of tensor products and tensor sums of densely defined linear operators on frame-related sequences and derive criteria for the preservation of frame properties.

Extending these results to infinite tensor products presents additional difficulties due to the lack of a canonical basis structure and the dependence on reference sequences. Consequently, frame and semi-frame properties may not automatically persist in infinite tensor product settings. We identify conditions ensuring their preservation and provide examples illustrating the limitations of such extensions.
\section{Preliminaries}  
This section summarizes the essential concepts used throughout the paper, including frame-related sequences,  analysis operator and basic operator- theoretical results which we used in following section. We also review the structure of tensor products of Hilbert spaces. These preliminaries establish the notation and tools required for the results that follow.
Throughout the paper, $H_1$ and $H_2$ denote separable Hilbert spaces. For $f \in H_1$ and $g \in H_2$, the tensor product of $f$ and $g$ is written as $f \otimes g$, while the tensor product of $H_1$ and $H_2$ is denoted by $H_1 \otimes H_2$.
Let $A$ and $B$ be densely defined operators on $H_1$ and $H_2$, respectively. The tensor product of $A$ and $B$ is denoted by $A \otimes B$ and the tensor sum by $A \boxplus B$. The domain, null space and range of an operator $A$ are denoted by $\mathrm{Dom}(A)$, $N(A)$ and $R(A)$ respectively. The set of natural numbers is denoted by $\mathbb{N}$.
\begin{defn} \cite{1,2002,7}
		Let $F = \{f_n\}_{n\in \mathbb{N}}$ be a sequence in a Hilbert space $H$. 
  \begin{enumerate}[(i)]
 \item  $F$ is said to be a complete sequence for $H$ if $\overline{span\{f_n\}_{n \in \mathbb{N}}} = H$.
      \item $F$ is said to be a frame for $H$ if there exist constants $0< A \leq B < \infty$ such that
		\begin{equation*}
		A\| f \| ^2 \leq \sum_{n \in \mathbb{N}} | \langle f, f_n \rangle |^2 \leq  B\| f \| ^2, \hspace*{0.5cm}  \text{for all     } f \in H.\end{equation*}
  \item $F$ is said to be a Bessel sequence for $H$ if it satisfies the right hand side of the frame inequality.
 \item $F$ is said to be an upper semi-frame for $H$ if $F$ is a complete Bessel sequence in $H.$
  \item $F$ is said to be a lower semi-frame for $H$ if $F$ is a complete sequence for $H$ and satisfies the left hand side of the frame inequality.
  $F$ is said to be a lower semi-frame sequence for $H$ if $\{f_n\}_{n \in \mathbb{N}}$ is a lower semi-frame for $\overline{span\{f_n\}_{n \in \mathbb{N}}}$.
  \item $F$ is said to be a Riesz sequence for $H$ if there exist constants $0 < A \leq B < \infty$ such that
	\begin{equation*}
	A \sum_{n \in \mathbb{N}} | c_n| ^2 \leq \| \sum_{n \in \mathbb{N}} c_n f_n \| ^2 \leq B \sum_{n \in \mathbb{N}} | c_n| ^2
	\end{equation*}
	for all finite scalar sequences $\{c_n\}_{n \in \mathbb{N}} \in \ell^2$.
 \item $F$ is said to be a Riesz-Fischer sequence for $H$ if $F$ satisfies the left hand side of the Riesz sequence inequality.
  
   \end{enumerate}
	\end{defn}  
	
Consider a sequence $\{f_n\}_{n \in \mathbb{N}}$ in $H$, we can define analysis operator as $C: Dom(C) \subseteq H \rightarrow \ell^2$ is defined by \begin{eqnarray*}
			Dom(C) &=& \{f \in H : \sum_{n=1}^{\infty} | \langle f, f_n \rangle |^2 < \infty \}\\
			Cf &=& \{\langle f, f_n \rangle\}_{n \in \mathbb{N}}, \quad \text{ for } f \in Dom(C).
		\end{eqnarray*}
 \begin{defn}\cite{kato}
     Let $A$ be a densely defined closed operator on $H$. The reduced minimum modulus for $A$ is defined by $$\gamma(A) = \inf \Big\{ \frac{\|Af\|}{\|f\|} : f \neq0 \in Dom(A) \cap N(A)^\perp\Big\}.$$
 \end{defn}
 \begin{prop} \cite{4a} \label{closed}
     Let $A$ be a densely defined closed operator on $H$. Then the following statements are equivalent:
     \begin{enumerate}[(i)]
         \item $\gamma(A) >0$.
         \item $R(A)$ is closed.
         \item $R(A^*)$ is closed.
     \end{enumerate}
 \end{prop}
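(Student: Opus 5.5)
The proof runs (i)$\Leftrightarrow$(ii) through the decomposition $H = N(A)\oplus N(A)^\perp$, and (ii)$\Leftrightarrow$(iii) through the closed range theorem. Throughout we use that $N(A)$ is closed, because $A$ is closed. The main obstacle is the second equivalence in the unbounded setting, which is where I expect to spend most of the effort.

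\textbf{(i)$\Rightarrow$(ii).} Let $y_k = Af_k \to y$. Replacing $f_k$ by $f_k - P_{N(A)}f_k$ does not change $Af_k$. Note that $N(A)\subseteq \mathrm{Dom}(A)$, so the projected vectors stay in $\mathrm{Dom}(A)\cap N(A)^\perp$. Then $\gamma(A)>0$ gives
\[ \|f_k-f_j\|\le \gamma(A)^{-1}\|Af_k-Af_j\|, \]
so $(f_k)$ is Cauchy and converges to some $f$. Closedness of $A$ then gives $f\in\mathrm{Dom}(A)$ and $Af=y$, so $R(A)$ is closed.

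\textbf{(ii)$\Rightarrow$(i).} Consider the restriction $A_0 = A|_{\mathrm{Dom}(A)\cap N(A)^\perp}$. It is a closed, injective operator from the Hilbert space $N(A)^\perp$ onto the closed subspace $R(A)$. Its inverse $A_0^{-1}: R(A)\to N(A)^\perp$ is closed and everywhere defined on a Banach space. By the closed graph theorem it is bounded, and then $\gamma(A) = \|A_0^{-1}\|^{-1} > 0$.

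\textbf{(ii)$\Leftrightarrow$(iii).} This is the closed range theorem for densely defined closed operators. Since $A$ is closed and densely defined, $A^*$ is densely defined and closed, with $A^{**}=A$. So by symmetry it suffices to prove (ii)$\Rightarrow$(iii).

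We have $\overline{R(A^*)} = N(A)^\perp$. Given $h\in N(A)^\perp$, define a linear functional on $R(A)$ by
\[ \phi(Af) = \langle f, h\rangle, \quad f\in \mathrm{Dom}(A). \]
This is well defined because $h\perp N(A)$. It is bounded: taking $f\in N(A)^\perp$ gives $|\phi(Af)|\le \|h\|\,\gamma(A)^{-1}\|Af\|$, using (i), which we already know follows from (ii).

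Extend $\phi$ by Riesz representation to some $g$, so that $\langle Af, g\rangle = \langle f, h\rangle$ for all $f\in\mathrm{Dom}(A)$. This means $g\in\mathrm{Dom}(A^*)$ and $A^*g = h$. Hence $N(A)^\perp\subseteq R(A^*)$, and so $R(A^*)$ is closed.

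Applying the same argument to $A^*$ in place of $A$ gives (iii)$\Rightarrow$(ii).

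The delicate points in this step are that the duality relations $N(A)=R(A^*)^\perp$ and $A^{**}=A$ rely on closedness and dense domain, and that the functional construction must handle domains correctly.
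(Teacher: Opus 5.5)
The paper gives no proof of this proposition; it is quoted from \cite{4a}, so there is no internal argument to compare yours against. Your proof is a correct and self-contained proof of the classical closed range theorem for closed densely defined operators. The delicate points are all handled:
\begin{enumerate}[(i)]
\item The projection onto the closed subspace $N(A)\subseteq\mathrm{Dom}(A)$ keeps you inside $\mathrm{Dom}(A)\cap N(A)^\perp$.
\item The restriction $A_0$ has closed graph, namely the graph of $A$ intersected with $N(A)^\perp\times H$.
\item The functional $\phi$ is bounded on all of $R(A)$. Every element of $R(A)$ is $Af$ with $f\in\mathrm{Dom}(A)\cap N(A)^\perp$, and $\langle f,h\rangle$ does not change under this reduction because $h\perp N(A)$.
\item The inclusion $R(A^*)\subseteq\overline{R(A^*)}=N(A)^\perp$ closes the loop.
\end{enumerate}

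Your route also gives slightly more than the statement. Take the Riesz representative $g$ inside the closed subspace $R(A)=N(A^*)^\perp$. Then $\|g\|=\|\phi\|\le\gamma(A)^{-1}\|A^*g\|$. Every $g'\in\mathrm{Dom}(A^*)\cap N(A^*)^\perp$ arises this way: take $h=A^*g'$, and then $g-g'\in N(A^*)\cap N(A^*)^\perp=\{0\}$. Hence $\gamma(A^*)\ge\gamma(A)$, and by symmetry you get the sharper identity $\gamma(A)=\gamma(A^*)$, which is usually stated alongside this result.

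Your argument also shows exactly where closedness of $A$ is essential. For example, the identity restricted to a dense proper subspace has $\gamma=1$ but non-closed range. This matters because the paper later applies this proposition to $C_1\otimes C_2$ with domain $\mathrm{Dom}(C_1)\odot\mathrm{Dom}(C_2)$, and that operator is in general not closed.
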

 \begin{prop}\label{1} \cite{2011}
	Let $\{f_n\}_{n \in \mathbb{N}} $ be a sequence  in $H$. Then the following statements hold.
	\begin{enumerate}[(i)]
     \item $\{f_n\}_{n \in \mathbb{N}} $ is complete in $H$ if and only if $C$ is injective.
		\item   $\{f_n\}_{n \in \mathbb{N}} $ is a lower semi-frame for $H$ if and only if $C$ is injective and $R(C)$ is closed.
		\item $\{f_n\}_{n \in \mathbb{N}} $ is a Riesz-Fischer sequence if and only if $C$ is surjective.
        
	\end{enumerate}
\end{prop}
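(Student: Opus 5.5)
This is a cited result about the analysis operator $C$ (with its natural domain $\mathrm{Dom}(C)$) of $\{f_n\}_{n\in\mathbb{N}}$, and I would prove its three parts directly.

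\emph{Part (i).} Every finite linear combination of the $f_n$ lies in $\mathrm{Dom}(C)$. Also, $f\in N(C)$ exactly when $\langle f,f_n\rangle=0$ for all $n$, that is, when $f\perp \overline{span\{f_n\}}$. So $N(C)=\big(\overline{span\{f_n\}}\big)^\perp$. This space is $\{0\}$ if and only if the sequence is complete. This part needs no further ingredient.

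\emph{Part (ii).} First I would check that $C$ is closed. Suppose $h_k\to h$ in $H$ and $Ch_k\to c$ in $\ell^2$. Then each coordinate $\langle h_k,f_n\rangle$ converges to $\langle h,f_n\rangle$, so $c_n=\langle h,f_n\rangle$. Since $c\in\ell^2$, we get $h\in \mathrm{Dom}(C)$ and $Ch=c$. Now assume $C$ is injective. Then $N(C)^\perp=H$, and $\gamma(C)=\inf\{\|Cf\|/\|f\|: 0\neq f\in\mathrm{Dom}(C)\}$. By Proposition \ref{closed}, $R(C)$ is closed if and only if $\gamma(C)>0$.

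The lower frame inequality $\sum_n|\langle f,f_n\rangle|^2\ge A\|f\|^2$ for all $f\in H$ is trivially true for $f\notin\mathrm{Dom}(C)$, where the left side is infinite. On $\mathrm{Dom}(C)$ it reads exactly $\|Cf\|^2\ge A\|f\|^2$, i.e.\ $\gamma(C)^2\ge A>0$. Combining this with (i) for completeness gives (ii) in both directions. The one subtlety is that $C$ need not be densely defined, so Proposition \ref{closed} cannot be quoted verbatim. I would instead prove the needed implication directly: for a closed injective operator, a closed range makes $C^{-1}\colon R(C)\to H$ a closed operator defined on a Banach space, hence bounded by the closed graph theorem. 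This gives $\gamma(C)>0$. The converse, that $\gamma(C)>0$ forces $R(C)$ closed, is the standard Cauchy-sequence argument using the closedness of $C$.

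\emph{Part (iii).} This is the main step. The Riesz--Fischer condition $A\sum|c_n|^2\le\|\sum c_nf_n\|^2$ on finite sequences is equivalent to the moment problem $\langle f,f_n\rangle=c_n$ being solvable for every $c\in\ell^2$, which is exactly surjectivity of $C$. For the direction from the inequality to surjectivity, I would take $c\in\ell^2$ and view $\varphi(\sum d_nf_n)=\sum d_n\overline{c_n}$ as a functional on $span\{f_n\}$. It is well defined and bounded: the lower inequality forces the $f_n$ to be $\omega$-independent on finite sums, and it gives $|\varphi(g)|\le\|d\|\|c\|\le A^{-1/2}\|g\|\,\|c\|$. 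Extending $\varphi$ by continuity to the closed span and applying Riesz representation yields $f$ with $\langle f,f_n\rangle=c_n$.

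For the converse, assume $C$ is surjective. Restrict $C$ to $\mathrm{Dom}(C)\cap \overline{span\{f_n\}}$; projecting onto the closed span preserves inner products with the $f_n$, so this restriction is still surjective. It is closed and injective, so its inverse is a closed operator defined on all of $\ell^2$, hence bounded with some norm $M$. Then for finite $d$, taking $f=C^{-1}d$ gives
\[
\sum|d_n|^2=\big\langle f,\textstyle\sum d_nf_n\big\rangle\le M\|d\|\,\big\|\textstyle\sum d_nf_n\big\|,
\]
which is the Riesz--Fischer bound with $A=M^{-2}$. The identity $\sum|d_n|^2=\langle f,\sum d_nf_n\rangle$ depends on how the conjugations are placed, so I would fix the convention once at the start and carry it consistently.
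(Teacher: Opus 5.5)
The paper gives no proof of this proposition; it is quoted from \cite{2011}, so there is no internal argument to compare against. Your proof is correct and self-contained. Part (i) follows from $N(C)=\{f_n\}^{\perp}$. Part (ii) follows from the closedness of $C$, the closed graph theorem applied to $C^{-1}$ on the Banach space $R(C)$, and the Cauchy-sequence argument for the converse. For (iii), one direction is the Riesz representation of $\sum d_nf_n\mapsto\sum d_n\overline{c_n}$. The other is the closed graph theorem for the inverse of $C$ restricted to $\mathrm{Dom}(C)\cap\overline{span\{f_n\}}$; surjectivity survives the restriction because the projection onto the closed span preserves every $\langle\cdot,f_n\rangle$. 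The advantage of your route over simply quoting Proposition \ref{closed} is that it never uses density of $\mathrm{Dom}(C)$. That hypothesis can genuinely fail for the analysis operator of a general sequence, so you were right to avoid it.

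One sentence must be deleted: the opening claim of (i), that every finite linear combination of the $f_n$ lies in $\mathrm{Dom}(C)$, is false. Take a unit vector $e$, let $H=span\{e\}$, and set $f_n=e$ for all $n$. Then $\sum_n|\langle f_1,f_n\rangle|^2=\infty$, so $f_1\notin\mathrm{Dom}(C)$. In fact $\mathrm{Dom}(C)=\{0\}$ even though the sequence is complete, and even a lower semi-frame, with $C$ injective and $R(C)=\{0\}$ closed, as (ii) predicts. This example also shows why the density issue you raise in (ii) is real. Nothing in your argument depends on the false claim. In (i) you only need that any $f$ with $\langle f,f_n\rangle=0$ for all $n$ lies trivially in $\mathrm{Dom}(C)$, so removing the sentence leaves the proof intact.
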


Let $H_1$ and $H_2$ be separable Hilbert spaces.  Let $H_1$ and $H_2$ be separable Hilbert spaces over $F.$ For each $(x,y) \in H_1 \times H_2$,define a map $x\odot y : H_1 \times H_2 \rightarrow F$ such that $(x \odot y)(f,g) = \langle x,f \rangle\langle y,g\rangle$. Let $$H_1 \odot H_2 = \left\{\sum _{i=1}^{n} x_i \odot y_i: (x_i,y_i )\in H_1 \times H_2 \right\}.$$ Define a sesquilinear form on $H_1 \odot H_2$ as 
$$
\langle f_1 \odot g_1, f_2 \odot g_2 \rangle 
= \langle f_1, f_2 \rangle_{H_1}\langle g_1, g_2 \rangle_{H_2},
$$
and extend it linearly on $H_1 \odot H_2$. This form is positive definite and its induced norm is 
$$
\|f \odot g\| = \|f\|_{H_1}\|g\|_{H_2}.
$$
The tensor product of Hilbert spaces $H_1$ and $H_2$, $H_1 \otimes H_2$ is defined as the completion of $H_1 \odot H_2$ with respect to this norm. For more details refer to \cite{tensor, Wiedmann}.
Let $\{H_n\}_{n \in \mathbb{N}}$ be a sequence of separable Hilbert spaces. In general, the algebraic tensor product $\odot_{n \in \mathbb{N}} H_n$ does not admit a canonical Hilbert space completion without additional structure. A standard construction, due to von Neumann \cite{infinite}, introduces a reference sequence 
$\Omega = \{\Omega_n\}_{n \in \mathbb{N}}$, where $\Omega_n \in H_n$ and $\|\Omega_n\| = 1$ for all $n \in \mathbb{N}$. 
For elementary tensors $x = \odot_{n=1}^\infty x_n$ and 
$y = \odot_{n=1}^\infty y_n$, where $x_n=y_n=\Omega_n$ for all but finitely many $n$. 
Define the sesquilinear form on $\odot_{n \in \mathbb{N}} H_n$ as
$$
\langle x, y \rangle := \prod_{n=1}^\infty \langle x_n, y_n \rangle_{H_n}.
$$
The product converges since all but finitely many terms are $1$. Completing this 
pre-Hilbert space yields $\bigotimes_{n\in\mathbb{N}} (H_n,\Omega_n)$, called the von Neumann infinite tensor product.\\
Let $A$ and $B$ be densely defined operators on $H_1$ and $H_2$, respectively. Now, define the tensor product of $A$ and $B$, $A \otimes B$ on $H_1 \otimes H_2$ by $(A \otimes B) (x \otimes y) = Ax \otimes By$ with domain of $A \otimes B$ is $Dom(A \otimes B) = \{x \otimes y \in H_1 \otimes H
_2 : Ax \otimes By \in H_1 \otimes H_2\} =Dom(A) \odot Dom(B) $ and the range $R(A \otimes B) = R(A) \odot R(B)$. Moreover, define the tensor sum of $A$ and $B$, $A \boxplus B$ on $H_1 \otimes H_2$ by $A \boxplus B = A \otimes I_{H_1} + I_{H_2} \otimes B$ with $Dom(A \boxplus B) = Dom(A) \odot Dom(B)$. 
\begin{prop}\cite{tensor}
    Let $A_i$ and $B_i$ be bounded linear operators on Hilbert spaces $H_1$ and $H_2$ respectively, where $i=1,2.$ Then, the following hold. 
    \begin{enumerate}[(i)]
        \item $\alpha A_1 \otimes B_1 = A_1 \otimes \alpha B_1,\text{ for }\alpha \in \mathbb{C}.$\\
        \item $ (A_1+A_2) \otimes B_1 = A_1 \otimes B_1 + A_2 \otimes B_1$.\\
        \item $A_1 \otimes (B_1+B_2) = A_1 \otimes B_1+ A_1 \otimes B_2$.\\
        \item $ (A_1 \otimes B_1)(A_2 \otimes B_2) = A_1A_2 \otimes B_1B_2$.\\
       \item  $(A_1 \otimes B_1)^* = A_1^* \otimes B_1^*$.\\
       \item  If $A_1$ and $B_1$ are invertible, then
        $ (A_1 \otimes B_1)^{-1} = A_1^{-1} \otimes B_1^{-1}$.\\
    \end{enumerate}
\end{prop}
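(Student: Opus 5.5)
The plan is to reduce everything to elementary tensors and then extend by linearity, boundedness and density. Since $A_i,B_i$ are bounded, I will first show that $A\otimes B$, defined on elementary tensors by $x\otimes y\mapsto Ax\otimes By$, extends to a bounded operator on $H_1\otimes H_2$ with $\|A\otimes B\|\le\|A\|\,\|B\|$. To do this I factor $A\otimes B=(A\otimes I)(I\otimes B)$ on the algebraic tensor product $H_1\odot H_2$. For $u=\sum_{i=1}^k x_i\otimes y_i$, I rewrite $u$ using an orthonormal basis $\{e_j\}$ of $\mathrm{span}\{y_i\}$ as $u=\sum_j x_j'\otimes e_j$. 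Then
$$\|(A\otimes I)u\|^2=\sum_j\|Ax_j'\|^2\le\|A\|^2\sum_j\|x_j'\|^2=\|A\|^2\|u\|^2 .$$
The same argument applies to $I\otimes B$. The bounded operator on $H_1\odot H_2$ then extends uniquely to the completion. This step also shows that $A\otimes B$ is well defined, i.e. independent of the representation of $u$, because the rewritten expression depends only on $u$.

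With boundedness in hand, each identity (i)–(v) is an equality between two bounded operators on $H_1\otimes H_2$. It therefore suffices to check it on elementary tensors $x\otimes y$, since their span is dense; for the adjoint it suffices to check it on pairs of elementary tensors.

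For (i), both sides send $x\otimes y$ to $\alpha Ax\otimes By=Ax\otimes\alpha By$, by bilinearity of $\otimes$ on vectors. The vector-level bilinearity follows directly from the definition of $x\odot y$ as the functional $(f,g)\mapsto\langle x,f\rangle\langle y,g\rangle$.

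For (ii) and (iii), $(A_1+A_2)x\otimes By=A_1x\otimes By+A_2x\otimes By$, again by bilinearity, and similarly in the second slot.

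For (iv), $(A_1\otimes B_1)(A_2x\otimes B_2y)=A_1A_2x\otimes B_1B_2y$.

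For (v), I compute
$$\langle (A_1\otimes B_1)(x\otimes y),x'\otimes y'\rangle=\langle A_1x,x'\rangle\langle B_1y,y'\rangle=\langle x\otimes y,A_1^*x'\otimes B_1^*y'\rangle .$$
This extends by sesquilinearity and continuity to all of $H_1\otimes H_2$, and uniqueness of the adjoint gives the claim.

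For (vi), I apply (iv) to get $(A_1\otimes B_1)(A_1^{-1}\otimes B_1^{-1})=I\otimes I$, and likewise in the other order. Since $I\otimes I$ agrees with the identity on elementary tensors, it is the identity on $H_1\otimes H_2$. Hence $A_1^{-1}\otimes B_1^{-1}$ is a two-sided bounded inverse of $A_1\otimes B_1$.

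The main obstacle is the first step, namely well-definedness and boundedness of $A\otimes B$ on non-elementary finite sums. The orthonormalisation trick above handles it. Everything after that is routine verification on elementary tensors.
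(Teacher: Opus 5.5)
The paper does not prove this proposition; it is quoted from \cite{tensor}, so there is no in-text argument to compare yours with. Your proof is correct and follows the standard route. You bound $A\otimes I$ and $I\otimes B$ on $H_1\odot H_2$ by rewriting $u$ against an orthonormal basis of the span of the second (resp.\ first) factors. You then extend to the completion and verify (i)--(vi) on elementary tensors, using density of their span and continuity of both sides; for (v) you use sesquilinearity and uniqueness of the adjoint.

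One point needs tightening. Your stated reason for well-definedness, ``the rewritten expression depends only on $u$,'' is not accurate as written. The spanning set $\mathrm{span}\{y_i\}$ and the orthonormal basis $\{e_j\}$ both depend on the chosen representation of $u$.

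Your own estimate closes this gap. For any representation, your computation gives $\|\sum_i Ax_i\otimes y_i\|\le\|A\|\,\|\sum_i x_i\otimes y_i\|$. Apply this to two representations of the same $u$, concatenated with a sign change. That concatenation represents $0$, so the two formal images differ by a vector of norm $0$.

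Alternatively, in the paper's functional model, $\bigl(\sum_i Ax_i\odot y_i\bigr)(f,g)=u(A^*f,g)$, which visibly depends only on $u$.
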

\section{Frames in tensor products of Hilbert spaces}
In this section, we investigate how frame-related properties behave under tensor products of Hilbert spaces. We provide necessary and sufficient conditions ensuring that completeness, lower semi-frame, and Riesz–Fischer properties are preserved. We further examine how tensor products and tensor sums of operators act on such sequences.
Consider sequences $\{f_n\}_{n\in \mathbb{N}} \text{ and } \{g_m\}_{m \in \mathbb{N}}$ in $H_1 \text{ and } H_2$ respectively. We can define analysis operator for the sequence $\{f_n \otimes g_m\}_{m,n \in \mathbb{N}}$ as
$C: Dom(C) \subseteq H_1 \otimes H_2 \rightarrow \ell^2(\mathbb{N} \times \mathbb{N})$ $$ C(f\otimes g) = \{\langle f \otimes g, f_n \otimes g_m\rangle\}_{n,m \in \mathbb{N}} = \{\langle f,f_n\rangle\langle g,g_m\rangle\}_{n,m\in \mathbb{N}},$$ $$Dom(C) = \left\{  f\otimes g \in H_1 \otimes H_2 : \{\langle f \otimes g, f_n \otimes g_m\rangle\}_{n,m \in \mathbb{N}} \in \ell^2(\mathbb{N} \times \mathbb{N})\right\}.$$
We can observe from the definition of these operators that if $C_1$ and $C_2$ are the analysis operators for $\{f_n\}_{n \in \mathbb{N}}$ and $\{g_m\}_{m \in \mathbb{N}}$ respectively, then $C = U\circ (C_1 \otimes C_2)$, where $U: \ell^2(\mathbb{N}) \otimes \ell^2(\mathbb{N}) \rightarrow \ell^2(\mathbb{N \times N})$ defined as $U(\{a_n\}_{n \in \mathbb{N}} \otimes \{b_m\}_{m \in \mathbb{N}}) = \{a_nb_m\}_{n,m \in \mathbb{N}}$. Moreover, $U$ is a unitary operator. That is, \begin{eqnarray*}
    \|\{a_n\}_{n \in \mathbb{N}} \otimes \{b_m\}_{m \in \mathbb{N}} \|^2 &=& \sum_{n\in \mathbb{N}} \mid a_n \mid ^2 \sum_{m\in \mathbb{N}} \mid b_m \mid ^2\\ &=&\sum_{n,m\in \mathbb{N}} \mid a_n b_m \mid ^2\\ &=& \| \{a_nb_m\}_{n,m\in \mathbb{N}} \|^2. 
\end{eqnarray*}
\begin{theorem} \label{3a}\cite{Wiedmann}
  Let $\{f_n\}_{n \in \mathbb{N}} \subset H_1$ and $\{g_m\}_{m \in \mathbb{N}} \subset H_2$ be two sequences. Then $\{f_n \otimes g_m\}_{n,m \in \mathbb{N}} \subset H_1 \otimes H_2$ is a complete sequence if and only if $\{f_n\}_{n \in \mathbb{N}}$ and $\{g_m\}_{m \in \mathbb{N}}$ are complete sequences in $H_1$ and $H_2$ respectively.
  \end{theorem}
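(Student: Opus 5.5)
We prove both directions directly, using orthogonal complements and the identification of $H_1\otimes H_2$ with Hilbert--Schmidt operators (equivalently, via orthonormal bases); the analysis operator $C$ is not needed, since its domain is only defined on elementary tensors.

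\emph{Necessity.} Suppose $\{f_n\otimes g_m\}_{n,m\in\mathbb{N}}$ is complete in $H_1\otimes H_2$, but, say, $\{f_n\}_{n\in\mathbb{N}}$ is not complete in $H_1$. Then there is $0\neq f\perp f_n$ for all $n$. Pick any $0\neq g\in H_2$. Then
$$\langle f\otimes g, f_n\otimes g_m\rangle=\langle f,f_n\rangle\langle g,g_m\rangle=0$$
for all $n,m$, while $\|f\otimes g\|=\|f\|\|g\|\neq 0$. This contradicts completeness. The argument for $\{g_m\}_{m\in\mathbb{N}}$ is symmetric.

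\emph{Sufficiency.} Suppose both $\{f_n\}_{n\in\mathbb{N}}$ and $\{g_m\}_{m\in\mathbb{N}}$ are complete, and let $h\in H_1\otimes H_2$ satisfy $h\perp f_n\otimes g_m$ for all $n,m$; we must show $h=0$. Fix an orthonormal basis $\{e_j\}_{j\in\mathbb{N}}$ of $H_2$ and expand $h=\sum_j h_j\otimes e_j$ with $h_j\in H_1$ and $\sum_j\|h_j\|^2=\|h\|^2$. This expansion uses the standard fact that $\{u_i\otimes e_j\}_{i,j\in\mathbb{N}}$ is an orthonormal basis of $H_1\otimes H_2$ for any orthonormal basis $\{u_i\}_{i\in\mathbb{N}}$ of $H_1$; it follows from density of $H_1\odot H_2$.

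For fixed $f\in H_1$, define the bounded map $T_f:H_1\otimes H_2\to H_2$ on elementary tensors by $T_f(x\otimes y)=\langle x,f\rangle y$. It extends boundedly: in the basis $\{u_i\otimes e_j\}$ its matrix gives $\|T_f h\|\le\|f\|\|h\|$ by Cauchy--Schwarz. For every $g\in H_2$ we have
$$\langle h,f\otimes g\rangle=\langle T_f h,g\rangle,$$
since both sides are continuous in $h$ and agree on elementary tensors.

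Now take $f=f_n$. The orthogonality hypothesis gives $\langle T_{f_n}h,g_m\rangle=0$ for all $m$. By completeness of $\{g_m\}_{m\in\mathbb{N}}$, $T_{f_n}h=0$, so
$$T_{f_n}h=\sum_j\langle h_j,f_n\rangle e_j=0 .$$
Hence $\langle h_j,f_n\rangle=0$ for every $j$ and every $n$. By completeness of $\{f_n\}_{n\in\mathbb{N}}$, each $h_j=0$, and therefore $h=0$. Thus $\{f_n\otimes g_m\}_{n,m\in\mathbb{N}}$ is complete.

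The main care point is justifying the bounded partial-contraction $T_f$ and the expansion $h=\sum_j h_j\otimes e_j$ rigorously. The alternative, "span of products is dense", is equally easy: span$\{f_n\otimes g_m\}$ contains $x\otimes y$ for $x$ in span$\{f_n\}$ and $y$ in span$\{g_m\}$. Approximating arbitrary $x\otimes y$ using $\|x\otimes y-x'\otimes y'\|\le\|x-x'\|\|y\|+\|x'\|\|y-y'\|$ shows that the closure contains all elementary tensors, hence $H_1\odot H_2$, whose closure is $H_1\otimes H_2$ by definition. This second route avoids $T_f$ entirely, and I would actually present it for sufficiency, keeping the orthogonality argument for necessity.
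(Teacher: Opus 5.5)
Your proof is correct. Its necessity half is the same idea as the paper's: the paper takes $f\in N(C_1)$ and $g$ with $C_2g\neq 0$, and observes that $f\otimes g\in N(C_1\otimes C_2)=N(C)$. That is your orthogonality argument written through analysis operators. Yours is slightly cleaner, since you only need $g\neq 0$ rather than $g\in\mathrm{Dom}(C_2)\setminus N(C_2)$.

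Sufficiency is where you genuinely differ. The paper uses the factorization $C=U\circ(C_1\otimes C_2)$ together with Proposition~\ref{1}(i), and simply asserts that $C_1\otimes C_2$ is injective because $C_1$ and $C_2$ are. With $\mathrm{Dom}(C)$ consisting of elementary tensors (or at most $\mathrm{Dom}(C_1)\odot\mathrm{Dom}(C_2)$), this only rules out nonzero vectors of that special form being orthogonal to every $f_n\otimes g_m$. Completeness needs this for an arbitrary $h\in H_1\otimes H_2$, which is exactly the point you flag.

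Your density argument proves the statement for all of $H_1\otimes H_2$, using nothing beyond bilinearity and the cross-norm:
\begin{itemize}
\item the span of $\{f_n\otimes g_m\}$ contains $x\otimes y$ for $x,y$ in the respective spans;
\item the estimate $\|x\otimes y-x'\otimes y'\|\le\|x-x'\|\|y\|+\|x'\|\|y-y'\|$ puts every elementary tensor in the closure;
\item $H_1\odot H_2$ is dense in $H_1\otimes H_2$.
\end{itemize}
Your $T_f$ route also works; it is essentially a rigorous, basis-expansion version of ``$C_1\otimes I$ is injective.''

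The paper's route buys uniformity: the same factorization through $U$ is reused for its lower semi-frame and Riesz--Fischer theorems, so completeness becomes a one-line instance of that framework. Yours buys a self-contained argument that does not depend on how the domain of the tensor analysis operator is chosen. Both arguments tacitly assume $H_1,H_2\neq\{0\}$ in the necessity direction; otherwise $H_1\otimes H_2=\{0\}$ and the ``only if'' claim fails trivially.
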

\begin{proof}
    Let $\{f_n \otimes g_m\}_{n,m \in \mathbb{N}}$ to be a complete sequence. Then the analysis operator $C$ is injective. Since $U$ is unitary, $N(C) = N(C_1 \otimes C_2).$ Let $f \in N(C_1), g \in H_2\backslash N(C_2)$. Then $f\otimes g \in N(C_1 \otimes C_2).$ Since $g \neq 0$ and $C$ is injective, $ f=0$. Therefore, $\{f_n\}_{n \in \mathbb{N}}$ is a complete sequence in $H_1.$ In a similar way, one can prove that $\{g_m\}_{m \in \mathbb{N}}$ is a complete sequence in $H_2.$\\
    \indent Conversely, if $C_1$ and $C_2$ are injective, then $C_1 \otimes C_2$ is injective. This concludes that $C$ is injective.
\end{proof}
\begin{theorem}\label{3c}
    Let $\{f_n\}_{n \in \mathbb{N}}$ and $\{g_m\}_{m \in \mathbb{N}}$ be two sequences in $H_1$ and $H_2$ respectively such that the analysis operator $C_1$ of $\{f_n\}_{n \in \mathbb{N}}$ and the analysis operator $C_2$ of $\{g_m\}_{m \in \mathbb{N}}$ are densely defined. Then $\{f_n\}_{n \in \mathbb{N}}$ and $\{g_m\}_{m \in \mathbb{N}}$ are lower semi-frames for $H_1$ and $H_2$ respectively if and only if $\{f_n \otimes g_m\}_{n,m \in \mathbb{N}}$ is a lower semi-frame for $H_1 \otimes H_2$.
\end{theorem}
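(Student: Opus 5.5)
The plan is to work through the operator characterization in Proposition \ref{1}(ii): a sequence is a lower semi-frame exactly when its analysis operator is injective and has closed range. The factorization $C = U\circ(C_1\otimes C_2)$ with $U$ unitary reduces everything to $C_1\otimes C_2$. Injectivity is handled by Theorem \ref{3a}, since completeness of $\{f_n\otimes g_m\}$ is equivalent to completeness of both component sequences. So the real content is that $R(C_1)$ and $R(C_2)$ are closed if and only if $R(C_1\otimes C_2)$ is closed, in the injective setting.

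\textbf{Forward direction.} Assume both component sequences are lower semi-frames with lower bounds $A_1,A_2$, so $\|C_if\|\ge \sqrt{A_i}\|f\|$ on $Dom(C_i)$. Analysis operators are closed, and since they are assumed densely defined, Proposition \ref{closed} applies.

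1. Write $\|C_1 f\|^2=\langle C_1^*C_1 f,f\rangle$. Use the fact that $|C_1|=(C_1^*C_1)^{1/2}$ is a self-adjoint operator bounded below by $\sqrt{A_1}$, so $C_1=V_1|C_1|$ with $V_1$ an isometry.
2. Pass to tensors: $C_1\otimes C_2=(V_1\otimes V_2)(|C_1|\otimes|C_2|)$, and $V_1\otimes V_2$ is an isometry.
3. Use the spectral theorem for the commuting positive operators $|C_1|\otimes I$ and $I\otimes|C_2|$ to get $|C_1|\otimes|C_2|\ge \sqrt{A_1A_2}$ on the algebraic domain $Dom(C_1)\odot Dom(C_2)$.
4. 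Conclude $\|Cx\|^2\ge A_1A_2\|x\|^2$ for all $x$ in the domain, which gives the lower frame inequality with bound $A_1A_2$.

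A more elementary version of step 3 is also possible. Take a finite sum $x=\sum_k u_k\otimes v_k$, orthonormalize the $v_k$ with respect to the inner product $\langle C_2\cdot,C_2\cdot\rangle$, and apply the two lower bounds one after the other.

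\textbf{Reverse direction.} Assume $\{f_n\otimes g_m\}$ is a lower semi-frame with bound $A$. Completeness of both component sequences follows from Theorem \ref{3a}. For the lower bounds, fix a nonzero $g\in Dom(C_2)$; one exists because $C_2$ is densely defined. Then $C_2g\neq 0$ by injectivity. For every $f\in Dom(C_1)$,
\[
\|C_1f\|^2\,\|C_2 g\|^2=\|C(f\otimes g)\|^2\ge A\|f\|^2\|g\|^2 ,
\]
so $\{f_n\}$ satisfies the lower inequality with bound $A\|g\|^2/\|C_2g\|^2$. The same argument with the roles swapped handles $\{g_m\}$. Vectors outside $Dom(C_1)$ satisfy the lower inequality trivially, since the sum is infinite there.

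\textbf{Main obstacle.} The forward step for general (non-elementary) tensors is the hard part. The norm bound on elementary tensors $f\otimes g$ does not immediately extend to finite sums. Moreover, the paper's $Dom(C)$ is written only for elementary tensors, so one has to fix the domain as $Dom(C_1)\odot Dom(C_2)$ or its closure. I expect the polar decomposition / spectral argument, or the Gram–Schmidt argument with respect to $\langle C_2\cdot,C_2\cdot\rangle$, to handle this.
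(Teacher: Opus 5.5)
Your proposal is correct, and it takes a different route from the paper.

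\textbf{Comparison with the paper.} The paper never works with the frame inequalities. It argues entirely through Proposition~\ref{1}(ii) and ranges.
\begin{itemize}
\item Forward: closed $R(C_i)$ gives bounded inverses $C_i^{-1}$ on $R(C_i)$. Then $C_1^{-1}\otimes C_2^{-1}$ is bounded on $R(C_1)\otimes R(C_2)$, and Proposition~\ref{closed} yields closed $R(C_1\otimes C_2)$.
\item Converse: it takes $a_k\to a_0$ in $R(C_1)$, tensors with a fixed $z_0=C_2x_0\neq 0$, and uses closedness of $R(C_1\otimes C_2)$ to put $a_0$ back into $R(C_1)$.
\end{itemize}
You instead prove the lower inequality directly and get explicit bounds: $A_1A_2$ forward and $A\|g\|^2/\|C_2g\|^2$ backward. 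Your converse, plugging in $f\otimes g$, is shorter. It also bypasses the paper's step from $a_0\otimes z_0\in R(C_1\otimes C_2)$ to $a_0\in R(C_1)$, which the paper leaves unjustified. Your Gram--Schmidt forward argument, applying the two lower bounds one after the other, is the same mechanism hidden in the paper's use of $\|C_1^{-1}\otimes C_2^{-1}\|\le\|C_1^{-1}\|\,\|C_2^{-1}\|$. The polar-decomposition/spectral version is valid but heavier than needed. Working with the inequality also means you never need Proposition~\ref{closed}. The paper applies that proposition to $C_1\otimes C_2$ on an algebraic domain where the operator need not be closed.

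\textbf{The domain issue.} This is the one point you flag but do not close. A lower semi-frame for $H_1\otimes H_2$ needs the inequality for every $F$ with $\sum_{n,m}|\langle F,f_n\otimes g_m\rangle|^2<\infty$. Neither of your forward arguments reaches vectors outside $Dom(C_1)\odot Dom(C_2)$. Passing to the operator closure helps only if you also know that this algebraic domain is a core for the maximal analysis operator. The paper has the same limitation, since it defines $Dom(C)$ through elementary tensors, so relative to the paper nothing is missing.

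There is still a short fix:
\begin{itemize}
\item Take an orthonormal basis $\{e_i\}$ of $H_1$ and write $F=\sum_i e_i\otimes G_i$ with $\sum_i\|G_i\|^2=\|F\|^2$.
\item Set $F_m=\sum_i\langle G_i,g_m\rangle e_i\in H_1$. This converges since $\sum_i|\langle G_i,g_m\rangle|^2\le\|g_m\|^2\|F\|^2$, and it satisfies $\langle F,f_n\otimes g_m\rangle=\langle F_m,f_n\rangle$.
\end{itemize}
Then for every $F\in H_1\otimes H_2$,
\[
\sum_{n,m}|\langle F,f_n\otimes g_m\rangle|^2\ \ge\ A_1\sum_m\|F_m\|^2\ =\ A_1\sum_i\sum_m|\langle G_i,g_m\rangle|^2\ \ge\ A_1A_2\|F\|^2 .
\]
This settles the forward direction, including completeness, and it does not use the density hypothesis on $C_1$ and $C_2$ at all.
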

\begin{proof}
    By the Theorem $\ref{3a}$, we can conclude that $C$ is injective if and only if $C_1$ and $C_2$ are injective. Now, consider $R(C)$ to be closed. $R(C) = U(R(C_1 \otimes C_2))$ and the continuity of $U$ give that $R(C_1 \otimes C_2)$ is closed on $\ell^2(\mathbb{N})\otimes \ell^2(\mathbb{N})$. Consider a sequence $\{a_n\}_{n \in \mathbb{N}} \in R(C_1)$ such that $(a_n) \rightarrow a_0$ as $n \rightarrow \infty$ Since $C_2$ is injective, there exists $x_0 \in Dom(C_2)$ such that $z_0:= C_2x_0 \neq 0$. Then $\{a_n \otimes z_0\}_{n \in \mathbb{N}} \in R(C_1 \otimes C_2)$ and it converges to $a_0 \otimes z_0$. Since $R(C_1 \otimes C_2)$ is closed, $a_0 \otimes b_0 \in R(C_1 \otimes C_2)$. This implies that there exists $b_0 \in Dom(C_1)$ such that $a_0 = C_1b_0$. Therefore $R(C_1)$ is closed. Similarly, we can prove that $R(C_2)$ is closed.\\
    \indent Conversely, if $R(C_1)$ and $R(C_2)$ are closed, then by closed graph theorem, $C_1^{-1}$ and $C_2^{-1}$ are bounded. Therefore, $C_1^{-1}\otimes C_2^{-1}$ is bounded on $R(C_1) \odot R(C_2).$ We can extend $C_1^{-1}\otimes C_2^{-1}$ by continuity to $R(C_1) \otimes R(C_2).$ Since $C_1^{-1}\otimes C_2^{-1}$ is the inverse of $C_1\otimes C_2$ and $C_1^{-1}\otimes C_2^{-1}$ is bounded, $C_1\otimes C_2$ is bounded below by $\parallel C_1^{-1}\otimes C_2^{-1} \parallel > 0.$ As $C_1 \otimes C_2$ is densely defined, then by Proposition \ref{closed} $R(C_1 \otimes C_2)$ is closed. Then $R(C)$ is closed. This completes the proof.
\end{proof}
\begin{theorem}
Let $\{f_n\}_{n \in \mathbb{N}}$ and $\{g_m\}_{m \in \mathbb{N}}$ be two sequences in $H_1$ and $H_2$ respectively such that the analysis operator $C_1$ of $\{f_n\}_{n \in \mathbb{N}}$ and the analysis operator $C_2$ of $\{g_m\}_{m \in \mathbb{N}}$ are densely defined. Then, $\{f_n\}_{n \in \mathbb{N}}$ and $\{g_m\}_{m \in \mathbb{N}}$ are lower semi-frame sequences for $H_1$ and $H_2$ respectively if and only if $\{f_n \otimes g_m\}_{n,m \in \mathbb{N}}$ is a lower semi-frame sequence for $H_1 \otimes H_2$.
\end{theorem}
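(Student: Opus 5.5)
The plan is to reduce the statement to Theorem \ref{3c} by working with closed linear spans. Put $K_1 = \overline{span\{f_n\}_{n \in \mathbb{N}}} \subseteq H_1$, $K_2 = \overline{span\{g_m\}_{m \in \mathbb{N}}} \subseteq H_2$ and $K = \overline{span\{f_n \otimes g_m\}_{n,m \in \mathbb{N}}} \subseteq H_1 \otimes H_2$. The first step is to show $K = K_1 \otimes K_2$, viewed as a closed subspace of $H_1 \otimes H_2$.

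For the inclusion $K \subseteq K_1 \otimes K_2$, note that each $f_n \otimes g_m$ lies in $K_1 \odot K_2$, so $K$ sits inside the closure $K_1 \otimes K_2$. For the reverse inclusion, start from finite sums $\sum_i x_i \otimes y_i$ with $x_i \in span\{f_n\}$ and $y_i \in span\{g_m\}$; these lie in $span\{f_n \otimes g_m\}$. Then approximate general $x \in K_1$, $y \in K_2$ using the estimate
\[
\|x\otimes y - x'\otimes y'\| \leq \|x-x'\|\,\|y\| + \|x'\|\,\|y-y'\|.
\]
This puts $K_1 \odot K_2$ in $K$, and hence $K_1 \otimes K_2 \subseteq K$. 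Equivalently, the equality follows from Theorem \ref{3a} applied inside $K_1$ and $K_2$: there $\{f_n\}$ and $\{g_m\}$ are complete, so $\{f_n \otimes g_m\}$ is complete in $K_1 \otimes K_2$.

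With $K = K_1 \otimes K_2$ in hand, the statement becomes: $\{f_n\}$ is a lower semi-frame for $K_1$ and $\{g_m\}$ is a lower semi-frame for $K_2$ if and only if $\{f_n \otimes g_m\}$ is a lower semi-frame for $K_1 \otimes K_2$. This is exactly Theorem \ref{3c} with $H_i$ replaced by $K_i$. To apply it, one has to check its hypothesis: the analysis operators $\tilde C_1 = C_1|_{K_1}$ and $\tilde C_2 = C_2|_{K_2}$, with domains $Dom(C_i)\cap K_i$, must be densely defined in $K_1$ and $K_2$.

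This density check is the step I expect to be the main obstacle. Let $P_i$ denote the orthogonal projection onto $K_i$. For every $f \in H_1$ and every $n$,
\[
\langle f, f_n\rangle = \langle P_1 f, f_n\rangle,
\]
since $f_n \in K_1$. Hence $f \in Dom(C_1)$ if and only if $P_1 f \in Dom(C_1)$, so $P_1(Dom(C_1)) = Dom(C_1) \cap K_1$. Because $Dom(C_1)$ is dense in $H_1$ and $P_1$ is continuous and onto $K_1$, its image is dense in $K_1$. The same argument works for $C_2$.

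It also has to be checked that the analysis operator of $\{f_n\otimes g_m\}$ on $K$ is the restriction of $C$ and factors as $U\circ(\tilde C_1 \otimes \tilde C_2)$. This follows from the same identity for elementary tensors, combined with $P_K = P_1 \otimes P_2$. Once that is verified, Theorem \ref{3c} gives both implications at once, which completes the proof.
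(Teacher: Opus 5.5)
Your argument is correct, up to a degenerate case discussed below that the paper also ignores. It follows a different route from the paper's.

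\textbf{Comparison with the paper.} The paper never passes to closed spans. It works with the reduced minimum modulus, which already discards kernels because $\gamma$ is an infimum over $\mathrm{Dom}\cap N^{\perp}$. It asserts $\gamma(C_1\otimes C_2)=\gamma(C_1)\gamma(C_2)$ and reads off both directions from Proposition \ref{closed} together with $R(C)=U(R(C_1\otimes C_2))$. This is shorter, but the multiplicativity is only computed over elementary tensors $x\otimes y$. By itself that gives just $\gamma(C_1\otimes C_2)\le\gamma(C_1)\gamma(C_2)$, which suffices for the implication from the tensor sequence to the components. The other implication needs control of non-elementary vectors in $N(C_1\otimes C_2)^{\perp}$, and tacitly the identification $N(C_1\otimes C_2)^{\perp}=K_1\otimes K_2$, which is your first step.

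Your reduction removes the kernels instead. On $K_1$ and $K_2$ the analysis operators are injective, so you are in the setting of Theorem \ref{3c}. That theorem's proof of the implication from components to tensor sequence rests on the bounded operator $C_1^{-1}\otimes C_2^{-1}$, and therefore covers arbitrary tensors. The price is the bookkeeping you correctly supply:
\begin{enumerate}[(i)]
\item $K=K_1\otimes K_2$;
\item density of $\mathrm{Dom}(C_i)\cap K_i=P_i(\mathrm{Dom}(C_i))$ in $K_i$, which is exactly where the density hypothesis is used;
\item compatibility of the analysis operator with restriction to $K$.
\end{enumerate}

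\textbf{Caveat.} Applying Theorem \ref{3c} to $K_1,K_2$ tacitly requires $K_1,K_2\neq\{0\}$, since its proof picks $x_0$ with $C_2x_0\neq 0$. Take $g_m=0$ for all $m$ and $f_n=e_n/n$ in $\ell^2(\mathbb{N})$. Then both analysis operators are everywhere defined, and $\{f_n\otimes g_m\}$ is trivially a lower semi-frame sequence because its closed span is $\{0\}$. Yet $\{f_n\}$ is not one. So you should add the assumption that neither sequence is identically zero; the same gap is present in the paper's statement and proof.
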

\begin{proof}
Let $\{f_n\}_{n \in \mathbb{N}}$ and $\{g_m\}_{m \in \mathbb{N}}$ be lower semi-frame sequences for $H_1$ and $H_2$ with analysis operator $C_1$ and $C_2$ respectively. Then by Proposition \ref{closed} $\gamma(C_1)  > 0$ and $\gamma(C_2) > 0$. Now, 
\begin{eqnarray*}
    \gamma(C_1 \otimes C_2) &=& \inf\{\frac{\parallel C_1x \parallel \parallel C_2 y \parallel}{\parallel x \parallel \parallel  y \parallel} :x\otimes y \in Dom(C_1 \otimes C_2) \cap N(C_1 \otimes C_2)^{\perp}\}\\
&=& (\inf\{\frac{\parallel C_1x \parallel}{\parallel x \parallel } :x\in Dom(C_1 ) \cap N(C_1 )^{\perp}\}).\\&& \hspace{0.5cm}  (\inf\{\frac{\parallel C_2y \parallel}{\parallel y \parallel } :y\in Dom(C_2 ) \cap N(C_2 )^{\perp}\})\\&=& \gamma(C_1).\gamma(C_2) >0.
\end{eqnarray*} 
This implies that $R(C_1 \otimes C_2)$ is closed, and since $U$ is a unitary operator, $R(C)$ is closed.\\
\indent Conversely, if $\{f_n \otimes g_m\}_{n,m \in \mathbb{N}}$ is a lower semi-frame sequence for $H_1 \otimes H_2$, then $R(C)$ is closed in $\ell^2(\mathbb{N} \times \mathbb{N}).$ This gives that $R(C_1 \otimes C_2)$ is closed in $\ell^2(\mathbb{N}) \otimes \ell^2(\mathbb{N}).$ Therefore, $\gamma(C_1 \otimes C_2) =\gamma(C_1). \gamma(C_2) >0,$ which implies that $R(C_1)$ and $R(C_2)$ are closed. This completes the proof. 
\end{proof}

\begin{theorem} \label{4.4}
 Let $\{f_n\}_{n \in \mathbb{N}} \subset H_1$ and $\{g_m\}_{m \in \mathbb{N}} \subset H_2$ be two sequences. Then $\{f_n \otimes g_m\}_{n,m \in \mathbb{N}} \subset H_1 \otimes H_2$ is a Riesz-Fischer sequence for $H_1 \otimes H_2$ if and only if $\{f_n\}_{n \in \mathbb{N}}$ and $\{g_m\}_{m \in \mathbb{N}}$ are Riesz-Fischer sequences for $H_1$ and $H_2$ respectively.
\end{theorem}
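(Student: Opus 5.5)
The plan is to work through Proposition \ref{1}(iii): a sequence is Riesz--Fischer if and only if its analysis operator is surjective. Together with the factorization $C = U\circ(C_1\otimes C_2)$, where $U$ is unitary, the statement reduces to showing that $C_1\otimes C_2$ is surjective onto $\ell^2(\mathbb{N})\otimes\ell^2(\mathbb{N})$ if and only if $C_1$ and $C_2$ are surjective onto $\ell^2(\mathbb{N})$. Equivalently, and more robustly, I would use the moment-problem form of Riesz--Fischer: for every $c\in\ell^2$ there is $f$ with $\langle f,f_n\rangle=c_n$ for all $n$. This reformulation also sidesteps the subtlety that the domain of $C_1\otimes C_2$ is only the algebraic tensor product.

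\emph{Sufficiency.} Assume $C_1$ and $C_2$ are surjective. Surjectivity of $C_i$ means $\{f_n\}$, resp.\ $\{g_m\}$, satisfies the lower Riesz inequality with constants $A_1,A_2>0$. I would prove the lower Riesz inequality for the tensor sequence directly. For a finite array $(c_{nm})$, set $h=\sum_{n,m}c_{nm}f_n\otimes g_m$. Choose an orthonormal basis $\{e_k\}$ of $H_2$ and write $h=\sum_k h_k\otimes e_k$ with $h_k=\sum_{n}\big(\sum_m c_{nm}\langle g_m,e_k\rangle\big)f_n$. Applying the lower bound for $\{f_n\}$ to each $h_k$ gives
$$\|h\|^2=\sum_k\|h_k\|^2\ge A_1\sum_n\sum_k\Big|\sum_m c_{nm}\langle g_m,e_k\rangle\Big|^2=A_1\sum_n\Big\|\sum_m c_{nm}g_m\Big\|^2 .$$
Applying the lower bound for $\{g_m\}$ then yields $\|h\|^2\ge A_1A_2\sum_{n,m}|c_{nm}|^2$. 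This is the step I expect to need the most care: it must be carried out with the finite-sum definition and a correct use of Parseval in $H_2$, rather than through the heuristic ``$C_1\otimes C_2$ is onto because $R(C_1)\odot R(C_2)$ is dense'', since density alone does not give surjectivity.

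\emph{Necessity.} Assume $\{f_n\otimes g_m\}$ is Riesz--Fischer with bound $A$. Assume also that $\{g_m\}$ is not identically zero, so some $g_{m_0}\ne0$; otherwise the tensor sequence is zero and cannot be Riesz--Fischer. For a finite sequence $(c_n)$, take $c_{nm}=c_n\delta_{m,m_0}$. The lower inequality gives
$$\Big\|\sum_n c_nf_n\Big\|^2\|g_{m_0}\|^2\ge A\sum_n|c_n|^2,$$
so $\{f_n\}$ is Riesz--Fischer with bound $A/\|g_{m_0}\|^2$. The symmetric argument handles $\{g_m\}$. Alternatively, in operator language: given $a\in\ell^2$, pick any $b$ with $b_{m_0}=1$, solve the tensor moment problem for $U(a\otimes b)$, and contract against a suitable vector in $H_2$. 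The coefficient choice above is cleaner, though, so I would use it.
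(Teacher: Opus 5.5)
Your argument is correct, and it takes a different route from the paper's proof. The paper stays at the level of analysis operators and Proposition~\ref{1}(iii). For necessity it uses $C=U(C_1\otimes C_2)$ and invokes Theorem~\ref{3c} to get that $R(C_1)$ and $R(C_2)$ are closed. It then argues that a nonzero $x_0\perp R(C_1)$ would produce a nonzero vector $U(x_0\otimes y_0)$ orthogonal to $R(C)$, contradicting surjectivity. For sufficiency it simply asserts $U(R(C_1\otimes C_2))=U(\ell^2(\mathbb{N})\otimes\ell^2(\mathbb{N}))=\ell^2(\mathbb{N}\times\mathbb{N})$.

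You instead work directly with the defining finite-sum lower inequality. The decomposition $h=\sum_k h_k\otimes e_k$ along an orthonormal basis of $H_2$, together with Parseval, gives $\|h\|^2\ge A_1A_2\sum_{n,m}|c_{nm}|^2$. For the converse, the choice $c_{nm}=c_n\delta_{m,m_0}$ gives the bound $A/\|g_{m_0}\|^2$.

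The operator route is shorter, but its sufficiency step identifies the range of $C_1\otimes C_2$ with the full Hilbert tensor product. Surjectivity of the factors only yields $R(C_1)\odot R(C_2)=\ell^2(\mathbb{N})\odot\ell^2(\mathbb{N})$, which is dense but not closed; this is exactly the point you flagged. Its necessity half also passes through Theorem~\ref{3c}, whose hypothesis that $C_1,C_2$ be densely defined is not part of this statement. Your approach avoids all domain and closure issues and needs no extra hypotheses. It also yields explicit Riesz--Fischer bounds in both directions.
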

\begin{proof}
Let $\{f_n \otimes g_m\}_{n,m \in \mathbb{N}}$ be a Riesz-Fischer sequence for $H_1 \otimes H_2.$ Since $C$ is surjective, $R(C_1 \otimes C_2)$ is closed. By the Theorem \ref{3c}, we can conclude that $R(C_1)$ and $R(C_2)$ are closed. Suppose $R(C_1) \neq \ell^2(\mathbb{N})$, then there exists $x_0 \neq 0\in \ell^2(\mathbb{N})$ such that $\langle x_0, C_1x\rangle = 0$ for all $x \in Dom(C_1)$. Now, let $x_0,y_0 \neq 0\in \ell^2(\mathbb{N})$ and $x \otimes y \in R(C)$. Then \begin{eqnarray*}
     \langle C(x \otimes y), U(x \otimes y)\rangle &=& \langle U(C_1x \otimes C_2y), U(x \otimes y)\rangle\\ &=& \langle C_1x \otimes C_2y, x \otimes y\rangle = 0.
\end{eqnarray*} This contradicts the fact that $R(C)$ is surjective. Therefore, $C_1$ is surjective. Similarly, we can prove that $C_2$ is surjective.\\
\indent Conversely, if $C_1$ and $C_2$ are surjective, then $$U(R(C_1 \otimes C_2)) = U(\ell^2(\mathbb{N}) \otimes \ell^2(\mathbb{N})) = \ell^2(\mathbb{N} \times \mathbb{N}).$$ This completes the proof.
\end{proof}
$A$ and $B$ are densely defined operators on $H_1$ and $H_2$ respectively. In the following theorems, we can study the properties of frame-related sequences under $A \otimes B$ and $A \boxplus B$ operators respectively.
\begin{theorem}
	Let $ \{f_{n}\}_{n \in \mathbb{N}}$ $\subset H_1, $ and  $ \{g_{m}\}_{m \in \mathbb{N}}$ $\subset H_2$.
    Let A and B be densely defined on $H_1$ and $H_2$ such that $R(A)$ and $R(B)$ are dense in $H_1$ and $H_2$, respectively. Then, $ \{f_{n}\}_{n \in \mathbb{N}}$ and $ \{g_{m}\}_{m \in \mathbb{N}}$  are complete sequences for $H_1$ and $H_2$, respectively if and only if $\{Af_n\otimes Bg_m\}_{n,m \in \mathbb{N}}$ is a complete sequence for $H_1 \otimes H_2. $
\end{theorem}
\begin{proof}
    Let $ \{f_{n}\}_{n \in \mathbb{N}}$ and $ \{g_{m}\}_{m \in \mathbb{N}}$ be complete sequences for $H_1$ and $H_2$ such that $C_1$ and $C_2$ as its analysis operators. Then $C_1$ and $C_2$ are injective. Now, the analysis operator for $ \{Af_{n} \otimes Bg_m\}_{n,m \in \mathbb{N}}$ is, $C(f\otimes g) = \{\langle f, Af_n \rangle \langle g,Bg_m\rangle \}_{n,m \in \mathbb{N}} = U(C_1A^* \otimes C_2 B^*)(f \otimes g) = U[(C_1 \otimes C_2) \otimes (A^* \otimes B^*)](f \otimes g) = U[(C_1 \otimes C_2) \otimes (A \otimes B)^*](f \otimes g)$. By our assumption, $A^*$ and $B^*$ are injective. Since $C_1,C_2, A^*, B^*$ are injective, $(C_1 \otimes C_2)\otimes(A \otimes B)^*$ is injective. Therefore,  $\{Af_n\otimes Bg_m\}_{n,m \in \mathbb{N}}$ is a complete sequence for $H_1 \otimes H_2.$\\
    Conversely, if  $\{Af_n\otimes Bg_m\}_{n,m \in \mathbb{N}}$ is a complete sequence for $H_1 \otimes H_2$, then $(C_1 \otimes C_2)\otimes(A \otimes B)^*$ is injective. Then, $C_1 \otimes C_2$ and $A^* \otimes B^*$ are injective. This implies that $C_1$ and $C_2$ are injective. Therefore $ \{f_{n}\}_{n \in \mathbb{N}}$ and $ \{g_{m}\}_{m \in \mathbb{N}}$ are complete sequences for $H_1$ and $H_2$.
\end{proof}
\begin{theorem}
 Let $ \{f_{n}\}_{n \in \mathbb{N}}$ $\subset H_1, $ and $ \{g_{m}\}_{m \in \mathbb{N}}$ $\subset H_2$ such that the analysis operators $C_1$ and $C_2$ respectively, which are densely defined on $H_1$ and $H_2,$ respectively. Let $A$ and $B $ be densely defined on $H_1$ and $H_2$ such that $R(A)$ and $R(B)$ are dense in $H_1$ and $H_2$, respectively. Assume that $\gamma(A^*) , \gamma(B^*)>0$ and $R(A^* \otimes B^*)$ is dense in $H_1\otimes H_2$. Then, $ \{f_{n}\}_{n \in \mathbb{N}}$ and $ \{g_{m}\}_{m \in \mathbb{N}}$  are lower semi-frames for $H_1$ and $H_2$, respectively if and only if $\{Af_n\otimes Bg_m\}_{n,m \in \mathbb{N}}$ is a lower semi-frame for $H_1 \otimes H_2.$
\end{theorem}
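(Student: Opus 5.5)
We follow the scheme of the previous theorem and reduce everything to the analysis operator. The analysis operator $\widetilde{C}$ of $\{Af_n\otimes Bg_m\}_{n,m \in \mathbb{N}}$ acts on elementary tensors by
$$\widetilde{C}(f\otimes g)=\{\langle A^*f,f_n\rangle\langle B^*g,g_m\rangle\}_{n,m\in\mathbb{N}},$$
so $\widetilde{C}=U\circ (C_1\otimes C_2)\circ(A^*\otimes B^*)=U\circ\big(C_1A^*\otimes C_2B^*\big)$ on the natural domain. By Proposition \ref{1}(ii) it suffices to show two equivalences: $\widetilde{C}$ is injective if and only if $C_1,C_2$ are injective, and $R(\widetilde{C})$ is closed if and only if $R(C_1),R(C_2)$ are closed. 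The injectivity part is exactly the preceding theorem. Density of $R(A)$ and $R(B)$ gives injectivity of $A^*$ and $B^*$, so completeness of $\{f_n\}$ and $\{g_m\}$ is equivalent to completeness of $\{Af_n\otimes Bg_m\}$.

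For the forward direction of the closed-range part, assume $R(C_1)$ and $R(C_2)$ are closed. By Theorem \ref{3c}, $C_1\otimes C_2$ is bounded below on its domain, say $\|(C_1\otimes C_2)u\|\ge c\|u\|$. Since $A^*$ is injective, $N(A^*)^\perp=H_1$ and $\gamma(A^*)>0$ means $\|A^*x\|\ge\gamma(A^*)\|x\|$ on $Dom(A^*)$; similarly for $B^*$. On elementary tensors this gives $\|A^*x\otimes B^*y\|\ge \gamma(A^*)\gamma(B^*)\|x\otimes y\|$, using the product computation of the reduced minimum modulus from the lower semi-frame sequence theorem. Combining the two bounds, $\widetilde{C}$ is bounded below by $c\,\gamma(A^*)\gamma(B^*)>0$. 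Since $U$ is unitary and $\widetilde{C}$ is densely defined (here the density hypotheses on $C_1,C_2$ enter), Proposition \ref{closed} shows that $R(\widetilde{C})$ is closed.

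For the converse, assume $R(\widetilde{C})$ is closed and $\widetilde{C}$ is injective, so $\gamma(\widetilde{C})>0$. Take $u\in Dom(C_1\otimes C_2)$ of the form $u=A^*x\otimes B^*y$. This is where the hypothesis that $R(A^*\otimes B^*)$ is dense is used: such vectors are dense, so we can approximate arbitrary elements of the domain. Because $A^*\otimes B^*$ is bounded below, the bound $\|\widetilde{C}(x\otimes y)\|\ge\gamma(\widetilde{C})\|x\otimes y\|$ gives
$$\|(C_1\otimes C_2)u\|\ge \gamma(\widetilde{C})\,\|x\otimes y\|.$$
We then need a reverse estimate $\|x\otimes y\|\ge \|u\|/\|A^*\otimes B^*\|$. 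This is the main obstacle: $A^*$ and $B^*$ need not be bounded. I would try first to work on the closed range $R(A^*)\odot R(B^*)$ and use that $(A^*)^{-1}$ is bounded, by closed range and injectivity. That yields a lower bound for $C_1\otimes C_2$ in the graph norm of $(A^*\otimes B^*)^{-1}$. Passing from this to the plain norm requires a closure argument, and this passage is the step I would expect to be delicate.

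Once $C_1\otimes C_2$ is bounded below on a core, $R(C_1\otimes C_2)$ is closed. Theorem \ref{3c}, applied in its reverse direction, then gives that $R(C_1)$ and $R(C_2)$ are closed, so $\{f_n\}$ and $\{g_m\}$ are lower semi-frames. If the closure step fails, a fallback is to restrict to elementary tensors with a fixed nonzero second factor, as in the proof of Theorem \ref{3c}, and deduce the lower bound for $C_1A^*$ alone, and hence closedness of $R(C_1)$ via $R(C_1A^*)=C_1(R(A^*)\cap Dom(C_1))$.
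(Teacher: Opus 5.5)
\textbf{The converse has a gap that cannot be closed: it is false as stated.} Your forward direction follows the paper's, and you have correctly located the obstruction in the converse. The paper's own proof skips that step by simply asserting that $\gamma(\widetilde C)>0$ ``gives'' $\gamma(C_1),\gamma(C_2)>0$.

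Here is a counterexample satisfying every hypothesis. Take $H_1=H_2=\ell^2(\mathbb{N})$ and $A=B=\mathrm{diag}(k)$ on its maximal domain. Then $A$ is self-adjoint, $R(A)=\ell^2(\mathbb{N})$, $\gamma(A^*)=1$, and $R(A^*\otimes B^*)=\ell^2(\mathbb{N})\odot\ell^2(\mathbb{N})$ is dense. Let $f_n=e_n/n$ and $g_m=e_m/m$, whose analysis operators are bounded. Then $Af_n\otimes Bg_m=e_n\otimes e_m$ is an orthonormal basis, hence a lower semi-frame. But $\sum_n|\langle e_N,f_n\rangle|^2=1/N^2$, so $\{f_n\}$ has no lower frame bound.

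In general, all you can extract is $\|(C_1\otimes C_2)u\|\ge\gamma(\widetilde C)\,\|(A^*\otimes B^*)^{-1}u\|$. This is a bound in a strictly weaker norm when $A^*$ is unbounded, and no closure argument upgrades it. Your fallback fails at the word ``hence''. In this example $C_1A^*$ is the identity on $Dom(A)$, so it is bounded below, but it is not closed. Proposition \ref{closed} therefore does not apply, and indeed $R(C_1)=R(C_1A^*)=Dom(A)$ is not closed.

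\textbf{The forward direction rests on the same faulty identification.} Both you and the paper identify the analysis operator $\widetilde C$ of $\{Af_n\otimes Bg_m\}$ with $U\circ(C_1\otimes C_2)\circ(A^*\otimes B^*)$. The identity $\langle h,Af_n\otimes Bg_m\rangle=\langle (A^*\otimes B^*)h,f_n\otimes g_m\rangle$ is only available for $h\in Dom(A^*)\odot Dom(B^*)$. The genuine analysis operator, the closed one to which Propositions \ref{1} and \ref{closed} apply, can be a strictly larger extension. So injectivity and lower bounds of the composite do not pass to it.

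This breaks the forward direction too, and with it the injectivity step you import from the preceding theorem. Again take $A=\mathrm{diag}(k)$, and let $h_0=(1/k)_k$, which is not in $Dom(A)$. The subspace $\{x\in Dom(A):\sum_k x_k=0\}=\{x\in Dom(A):\langle Ax,h_0\rangle=0\}$ is dense, since it contains $e_j-\frac1N\sum_{k=N+1}^{2N}e_k$, which tends to $e_j$. Gram--Schmidt therefore yields an orthonormal basis $\{f_n\}$ inside it. Take $B=I$ and $g_m=e_m$. Then the nonzero vector $h_0\otimes e_1$ is orthogonal to every $Af_n\otimes Bg_m$. So the tensor sequence is not even complete, although $\{f_n\}$ and $\{g_m\}$ are orthonormal bases.

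\textbf{A repair.} The statement becomes correct if you add that $A$ and $B$ are bounded. The hypotheses then force $A^*$ and $B^*$ to be bounded bijections: $\gamma(A^*)>0$ gives closed range, density of $R(A^*\otimes B^*)$ gives dense range, and density of $R(A)$ gives injectivity. The identity above then holds for every $h\in H_1\otimes H_2$, and $\|(A^*\otimes B^*)h\|$ is comparable to $\|h\|$. Your missing estimate $\|x\otimes y\|\ge\|u\|/\|A^*\otimes B^*\|$ becomes available, and both directions reduce to Theorem \ref{3c}.
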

\begin{proof}
    Let $ \{f_{n}\}_{n \in \mathbb{N}}$ and $ \{g_{m}\}_{m \in \mathbb{N}}$ be lower semi-frames for $H_1$ and $H_2$. Then, $\gamma(C_1),\gamma(C_2)>0$. This gives $\gamma(C_1 \otimes C_2)>0$. Since, $\gamma(A^*) , \gamma(B^*)>0$, $\gamma(A^* \otimes B^*) >0.$ Therefore, $\gamma((C_1 \otimes C_2) \otimes (A \otimes B)) >0$ By Proposition \ref{closed}, $R(C_1 \otimes C_2) \otimes (A \otimes B))$ is closed. Therefore, $R(C)$ is closed. According to our assumptions, $C_1, C_2, A^*, \text{ and } B^*$ are injective, which guarantee that $\{Af_n\otimes Bg_m\}_{n,m \in \mathbb{N}}$ is a complete sequence on $H_1\otimes H_2.$\\ \indent Conversely, if $\{Af_n\otimes Bg_m\}_{n,m \in \mathbb{N}}$ is a lower semi-frame for $H_1 \otimes H_2$, then the analysis operator $C = U[(C_1 \otimes C_2)\otimes(A^* \otimes B^*)]$ is injective and $ \gamma((C_1 \otimes C_2)\otimes(A^* \otimes B^*))>0$. This gives that, $\gamma(C_1)>0$ and $\gamma(C_2)>0$. As $C$ is injective, we can conclude that $C_1$ and $C_2$ are injective. This completes the proof.     
\end{proof}
\begin{theorem}
  Let $ \{f_{n}\}_{n \in \mathbb{N}}$ $\subset H_1, $ and $ \{g_{m}\}_{m \in \mathbb{N}}$ $\subset H_2$ be such that the analysis operators $C_1$ and $C_2$ respectively are densely defined on $H_1$ and $H_2,$ respectively. Let A and B be densely defined operators on $H_1$ and $H_2$, respectively, such that  $R(A^*)=H_1$ and $R(B^*)=H_2$. Then, $ \{f_{n}\}_{n \in \mathbb{N}}$ and $ \{g_{m}\}_{m \in \mathbb{N}}$  are Riesz-Fischer sequences for $H_1$ and $H_2$, respectively if and only if $\{Af_n\otimes Bg_m\}_{n,m \in \mathbb{N}}$ is a Riesz-Fischer sequence for $H_1 \otimes H_2.$
\end{theorem}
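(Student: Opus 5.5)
The plan is to follow the pattern of the two preceding theorems and write the analysis operator of $\{Af_n\otimes Bg_m\}_{n,m\in\mathbb{N}}$ in factored form. For $f\in Dom(A^*)$ and $g\in Dom(B^*)$ we have $\langle f, Af_n\rangle = \langle A^*f, f_n\rangle$ and $\langle g, Bg_m\rangle = \langle B^*g, g_m\rangle$. Hence on elementary tensors
\[
C(f\otimes g)=\{\langle A^*f,f_n\rangle\langle B^*g,g_m\rangle\}_{n,m\in\mathbb{N}} = U\big(C_1A^*\otimes C_2B^*\big)(f\otimes g) = U(C_1\otimes C_2)(A^*\otimes B^*)(f\otimes g).
\]
So $C=U\circ(C_1\otimes C_2)\circ(A^*\otimes B^*)$, extended linearly to the algebraic tensor product. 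By Proposition \ref{1}(iii), the Riesz--Fischer property of each sequence is equivalent to surjectivity of its analysis operator. Since $U$ is unitary, the whole statement reduces to this: $C_1$ and $C_2$ are both surjective if and only if the composite $(C_1\otimes C_2)(A^*\otimes B^*)$ has range all of $\ell^2(\mathbb{N})\otimes\ell^2(\mathbb{N})$.

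For the forward direction, assume $C_1$ and $C_2$ are surjective. Because $R(A^*)=H_1$, every $x\in Dom(C_1)$ is of the form $A^*f$, so $R(C_1A^*)=R(C_1)=\ell^2(\mathbb{N})$, and likewise $R(C_2B^*)=\ell^2(\mathbb{N})$. So $\{A f_n\}$ and $\{Bg_m\}$ are Riesz--Fischer sequences in $H_1$ and $H_2$, with analysis operators $C_1A^*$ and $C_2B^*$. Theorem \ref{4.4}, applied to these two sequences, then gives that $\{Af_n\otimes Bg_m\}_{n,m\in\mathbb{N}}$ is Riesz--Fischer. Working with the sequences $\{Af_n\}$ and $\{Bg_m\}$ rather than with the operator composition means the range of the tensor product operator never has to be computed directly.

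For the converse, assume $C$ is surjective. Applying Theorem \ref{4.4} in the reverse direction to the same pair $\{Af_n\}$, $\{Bg_m\}$ shows that $C_1A^*$ and $C_2B^*$ are surjective onto $\ell^2(\mathbb{N})$. Since $R(C_1A^*)\subseteq R(C_1)$, it follows that $C_1$ is surjective, and similarly for $C_2$. Proposition \ref{1}(iii) then finishes the proof.

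The main obstacle is the identification $R(C_1A^*)=R(C_1)$, that is, justifying that the analysis operator of $\{Af_n\}$ really is $C_1A^*$ with the correct domain. The domain of $C_1A^*$ is $\{f\in Dom(A^*): A^*f\in Dom(C_1)\}$, and the hypothesis $R(A^*)=H_1$ is exactly what lets every element of $Dom(C_1)$ be reached. One also has to check that the analysis operator of $\{Af_n\}$, defined through the pairings $\langle f,Af_n\rangle$, agrees with $C_1A^*$ wherever it matters. This holds on $Dom(A^*)$, and for surjectivity only the range matters, so it suffices to note that the range of the analysis operator of $\{Af_n\}$ contains $R(C_1A^*)=\ell^2(\mathbb{N})$ in the forward direction. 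In the converse, the inclusion of that range in $R(C_1)$ is needed, and this should be argued via $Dom(A^*)$ and the density of $R(A)$.
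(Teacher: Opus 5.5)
Your forward direction is fine. Write $D_1$ for the analysis operator of $\{Af_n\}_{n\in\mathbb{N}}$. The identity $\langle f,Af_n\rangle=\langle A^*f,f_n\rangle$ for $f\in Dom(A^*)$ shows that $D_1$ extends $C_1A^*$, so $R(D_1)\supseteq R(C_1A^*)=R(C_1)=\ell^2(\mathbb{N})$, and Theorem \ref{4.4} finishes. This differs mildly from the paper, which composes the surjections $A^*\otimes B^*$ and $C_1\otimes C_2$ directly; your route avoids computing the range of a tensor product operator.

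The converse, however, has a genuine gap, exactly where you suspected. Theorem \ref{4.4} gives surjectivity of $D_1$ and $D_2$ (the analysis operators of $\{Af_n\}$ and $\{Bg_m\}$), not of $C_1A^*$ and $C_2B^*$. The operator $D_1$ is defined at every $f\in H_1$ with $\{\langle f,Af_n\rangle\}_{n}\in\ell^2(\mathbb{N})$, not only on $Dom(A^*)$, so its range can be strictly larger than $R(C_1A^*)$. Hence your claim that Theorem \ref{4.4} ``shows that $C_1A^*$ and $C_2B^*$ are surjective'' does not follow. Nor can the inclusion $R(D_1)\subseteq R(C_1)$ be extracted from $Dom(A^*)$ and density of $R(A)$; the latter is not even a hypothesis of this theorem.

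In fact the converse is false as stated. Take $H_1=H_2=\ell^2(\mathbb{N})$, $Ae_n=ne_n$ on its maximal domain, $f_n=e_n/n$, $B=I$ and $g_m=e_m$. Then:
\begin{enumerate}[(i)]
\item $A=A^*$ is densely defined with $R(A^*)=\ell^2(\mathbb{N})$, since $A^{-1}e_n=e_n/n$ is bounded and everywhere defined;
\item $C_1$ is bounded and $f_n\in Dom(A)$;
\item $\{Af_n\otimes Bg_m\}_{n,m\in\mathbb{N}}=\{e_n\otimes e_m\}_{n,m\in\mathbb{N}}$ is an orthonormal basis, hence Riesz--Fischer;
\item but $\|f_n\|=1/n\to 0$, so $\{f_n\}_{n\in\mathbb{N}}$ is not Riesz--Fischer.
\end{enumerate}
Here $R(D_1)=\ell^2(\mathbb{N})$ while $R(C_1A^*)=R(C_1)=Dom(A)\neq\ell^2(\mathbb{N})$, even though $R(A)$ is all of $\ell^2(\mathbb{N})$.

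The paper's own proof makes the same unjustified claim $R(C_1A^*)=R(C_2B^*)=\ell^2(\mathbb{N})$, so following it will not close the gap. The converse needs an additional hypothesis, for instance that $A$ and $B$ are bounded. Then $Dom(A^*)=H_1$, so $D_1=C_1A^*$ exactly and $R(D_1)\subseteq R(C_1)$. Equivalently, $\|\sum_n c_nAf_n\|\le\|A\|\,\|\sum_n c_nf_n\|$ for finite sequences. With that hypothesis your argument goes through.
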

\begin{proof}
 Assume that $ \{f_{n}\}_{n \in \mathbb{N}}$ and $ \{g_{m}\}_{m \in \mathbb{N}}$  are Riesz-Fischer sequences for $H_1$ and $H_2$, respectively. Then $R(C_1)=R(C_2)= \ell^2(\mathbb{N})$. Since $A^*$ and $B^*$ are surjective, $A^* \otimes B^*$ is surjective. Therefore, $R((C_1 \otimes C_2) \otimes (A \otimes B)) = \ell^2(\mathbb{N})\otimes \ell^2(\mathbb{N})$. This concludes that $\{Af_n\otimes Bg_m\}_{n,m \in \mathbb{N}}$ is a Riesz-Fischer sequence for $H_1 \otimes H_2.$\\ \indent Conversely, $\{Af_n\otimes Bg_m\}_{n,m \in \mathbb{N}}$ is a Riesz-Fischer sequence for $H_1 \otimes H_2.$ Then by Theorem \ref{4.4}, $\{Af_n\}_{n \in \mathbb{N}}$ and $\{Bg_m\}_{m \in \mathbb{N}}$ are Riesz-Fischer sequences for $H_1$ and $H_2.$ As $R(C_1A^*) = R(C_2B^*) = \ell^2(\mathbb{N}),$ we can conclude that $C_1$ and $C_2$ are surjective. This completes the proof.  
\end{proof}
\begin{theorem} \label{4.8}
 Let $ \{f_{n}\}_{n \in \mathbb{N}}$ $\subset H_1, $ and $ \{g_{m}\}_{m \in \mathbb{N}}$ $\subset H_2$ be such that the analysis operators $C_1$ and $C_2$ respectively are densely defined on $H_1$ and $H_2,$ respectively. Let $A$ be a densely defined on $H_1$ and $B$ be a bounded operator on $H_2$ such that $R(A)$ and $R(B)$ are dense in $H_1$ and $H_2$, respectively. Assume that $\gamma(A^*) > \parallel B \parallel >0$ and $R(A \boxplus B)$ is dense in $H_1\otimes H_2$. If $ \{f_{n}\}_{n \in \mathbb{N}}$ and $ \{g_{m}\}_{m \in \mathbb{N}}$  are lower semi-frames for $H_1$ and $H_2$, respectively, then $\{A\boxplus B(f_n\otimes g_m)\}_{n,m \in \mathbb{N}}$ is a lower semi-frame for $H_1 \otimes H_2.$
\end{theorem}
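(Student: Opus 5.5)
The plan is to follow the pattern of the previous theorems and write the analysis operator of the transformed sequence through $C_1$, $C_2$ and the adjoint of the tensor sum. For $f\otimes g$ in the appropriate domain,
\[
\langle f\otimes g, (A\boxplus B)(f_n\otimes g_m)\rangle = \langle (A\boxplus B)^*(f\otimes g), f_n\otimes g_m\rangle .
\]
Hence the analysis operator $D$ of $\{(A\boxplus B)(f_n\otimes g_m)\}_{n,m\in\mathbb{N}}$ factors as
\[
D = U\circ (C_1\otimes C_2)\circ (A\boxplus B)^*,
\]
where $(A\boxplus B)^* \supseteq A^*\otimes I + I\otimes B^*$ and $U$ is the unitary used above. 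By Proposition \ref{1}(ii) it suffices to show that $D$ is injective and that $R(D)$ is closed, or equivalently, by Proposition \ref{closed}, that $\gamma(D)>0$.

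For injectivity, note that $C_1\otimes C_2$ is injective by Theorem \ref{3a} and the proof of Theorem \ref{3c}. The kernel of $(A\boxplus B)^*$ is $R(A\boxplus B)^\perp$, which is $\{0\}$ because $R(A\boxplus B)$ is assumed dense. A composition of injective maps is injective, so $D$ is injective and the sequence is complete.

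For closed range, we first show that $T:=A^*\otimes I+I\otimes B^*$ is bounded below. Since $R(A)$ is dense, $A^*$ is injective, so $\gamma(A^*)>0$ gives $\|A^*x\|\ge\gamma(A^*)\|x\|$ on $\mathrm{Dom}(A^*)$. The operator $A^*\otimes I$ is then bounded below by $\gamma(A^*)$ on $\mathrm{Dom}(A^*)\odot H_2$. One checks this on finite sums by orthonormalizing the second factors: if $u=\sum x_i\otimes e_i$ with $\{e_i\}$ orthonormal, then $\|(A^*\otimes I)u\|^2=\sum\|A^*x_i\|^2$. The operator $I\otimes B^*$ is bounded with norm $\|B^*\|=\|B\|$. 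A perturbation estimate then gives
\[
\|Tu\|\ge (\gamma(A^*)-\|B\|)\|u\|,
\]
and the constant is positive by hypothesis. Since the sequences are lower semi-frames, Theorem \ref{3c} shows that $C_1\otimes C_2$ is injective with closed range, so it is bounded below on its domain by some $c>0$ (the argument from the converse part of Theorem \ref{3c}). Composing and using that $U$ is unitary,
\[
\|Du\| \ge c(\gamma(A^*)-\|B\|)\|u\|
\]
for $u$ in the domain of $D$. So $\gamma(D)>0$ and $R(D)$ is closed. Equivalently, this is the lower frame bound $c^2(\gamma(A^*)-\|B\|)^2$, which together with completeness gives the conclusion.

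The main obstacle is the domain bookkeeping. $D$ is defined only on those $u$ for which $Tu\in\mathrm{Dom}(C_1\otimes C_2)$ and the coefficient sequence lies in $\ell^2$. We need $D$ to be densely defined and closed, or at least closable, before invoking Proposition \ref{closed}. An alternative is to verify the lower frame inequality directly on the algebraic tensors $\mathrm{Dom}(A^*)\odot H_2$ and then pass to the closure. I would first try the direct route: prove the inequality on elements of the domain, where no closedness is needed for the lower bound itself, and use the density of $R(A\boxplus B)$ only for completeness. A secondary subtle point is identifying $(A\boxplus B)^*$ with the closure of $A^*\otimes I+I\otimes B^*$, which I would justify from the boundedness of $B$, since adding a bounded operator does not change the adjoint domain.
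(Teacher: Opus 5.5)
\textbf{Same route as the paper, but there is a genuine gap.} Your plan matches the paper's argument: factor the analysis operator as $U(C_1\otimes C_2)(A\boxplus B)^*$, get injectivity from $N((A\boxplus B)^*)=R(A\boxplus B)^\perp=\{0\}$, and get the lower bound from the perturbation estimate $\|(A^*\otimes I+I\otimes B^*)u\|\ge(\gamma(A^*)-\|B\|)\|u\|$. The gap is exactly the ``domain bookkeeping'' you flag, and neither of your proposed remedies closes it.

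The identity $\langle h,(A\boxplus B)(f_n\otimes g_m)\rangle=\langle (A\boxplus B)^*h,f_n\otimes g_m\rangle$ holds only for $h\in\mathrm{Dom}((A\boxplus B)^*)$. Completeness and the lower frame inequality, however, must hold for every $h\in H_1\otimes H_2$. The actual analysis operator of the new sequence can be defined, and even vanish, at vectors outside $\mathrm{Dom}((A\boxplus B)^*)$, so injectivity of your composite $D$ says nothing about them. Completeness really needs $(A\boxplus B)\bigl(\mathrm{span}\{f_n\otimes g_m\}\bigr)$ to be dense. Density of $R(A\boxplus B)$ does not imply this for unbounded $A$, because the span need not be a core. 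Nor can a lower bound proved on $\mathrm{Dom}(A^*)\odot H_2$ be passed to the closure: $h\mapsto\sum_{n,m}|\langle h,(A\boxplus B)(f_n\otimes g_m)\rangle|^2$ is only lower semicontinuous, which is the wrong direction.

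\textbf{The statement is false for unbounded $A$.} Take $H_1=H_2=\ell^2(\mathbb{N})$, $Ae_k=ke_k$ on its maximal domain, $B=\frac12 I$, $g_m=e_m$, and $f_n=\frac{1}{\sqrt{n(n+1)}}(e_1+\cdots+e_n-ne_{n+1})$. The $f_n$ form an orthonormal basis of finitely supported vectors with coordinate sum $0$. All hypotheses hold:
\begin{enumerate}[(i)]
\item $R(A)=H_1$;
\item $\gamma(A^*)=1>\frac12=\|B\|>0$;
\item $(A\boxplus B)(x\otimes y)=(A+\frac12)x\otimes y$ with $A+\frac12$ onto, so $R(A\boxplus B)=\ell^2\odot\ell^2$ is dense.
\end{enumerate}
Now let $h=\{(k+\frac12)^{-1}\}_{k\in\mathbb{N}}\in\ell^2\setminus\mathrm{Dom}(A)$. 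Then $\langle (A\boxplus B)(f_n\otimes e_m),h\otimes e_1\rangle=\delta_{m1}\sum_k(f_n)_k=0$ for all $n,m$, so the sequence is not even complete. The paper's proof has the same flaw, since it infers injectivity of $C$ from that of $(A\boxplus B)^*$.

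\textbf{What does survive.} Your argument is correct when $A$ is bounded. Then $(A\boxplus B)^*=A^*\otimes I+I\otimes B^*$ is everywhere defined, and
\[
\sum_{n,m}|\langle h,(A\boxplus B)(f_n\otimes g_m)\rangle|^2\ge a_1a_2(\gamma(A^*)-\|B\|)^2\|h\|^2
\]
for all $h$, where $a_1,a_2$ are the lower bounds of $\{f_n\}$ and $\{g_m\}$. The bound $a_1a_2$ for $\{f_n\otimes g_m\}$ follows directly by slicing $h$ along orthonormal bases. Completeness then comes for free, and the density assumption on $R(A\boxplus B)$ becomes redundant. For unbounded $A$, some additional hypothesis on how $\{f_n\otimes g_m\}$ sits inside $\mathrm{Dom}(A\boxplus B)$ is unavoidable.
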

\begin{proof}
    Assume that $ \{f_{n}\}_{n \in \mathbb{N}}$ and $ \{g_{m}\}_{m \in \mathbb{N}}$  are lower semi-frames for $H_1$ and $H_2$, respectively. Then $C_1$ and $C_2$ are injective and bounded below. The analysis operator for the sequence $\{A\boxplus B(f_n\otimes g_m)\}_{n,m \in \mathbb{N}}$ is, \begin{eqnarray*}
        C(f \otimes g) &=&U[ (C_1 \otimes C_2) \otimes (A^* \otimes I_{H_1} + I_{H_2} \otimes B^*)]\\ &=& U[(C_1 \otimes C_2) \otimes (A^*  \boxplus B^*))]\\ &=&U[(C_1 \otimes C_2) \otimes (A  \boxplus B)^*)] .
    \end{eqnarray*}
    From our assumption, $(A \boxplus B) ^*$ is injective. Therefore, $C$ is injective. Since $C_1$ and $C_2$ are injective and bounded below, $C_1 \otimes C_2$ is so.  Now, let $f \otimes g \in Dom(C)$
    \begin{eqnarray*}
        \parallel C(f \otimes g) \parallel &=& \parallel U[ (C_1 \otimes C_2) \otimes (A^* \otimes I_{H_1} + I_{H_2} \otimes B^*)](f \otimes g) \parallel\\ & \geq & \gamma (C_1 \otimes C_2) \parallel (A^* \otimes I_{H_1} + I_{H_2} \otimes B^*) (f \otimes g) \parallel \\ &\geq &  \gamma (C_1 \otimes C_2) |\parallel A^* \otimes I_{H_1} (f \otimes g) \parallel - \parallel I_{H_2} \otimes B^*(f \otimes g)  \parallel |\\ &\geq & \gamma (C_1 \otimes C_2) (\parallel A^* \otimes I_{H_1} (f \otimes g) \parallel - \parallel I_{H_2} \otimes B^*(f \otimes g) \parallel ) \\
        &\geq &\gamma (C_1 \otimes C_2) ( \gamma(A^*) \parallel  f \otimes g \parallel - \parallel B \parallel \parallel f \otimes g  \parallel ) \\
        &\geq& \gamma (C_1 \otimes C_2) ( \gamma(A^*) - \parallel B \parallel)\parallel f \otimes g  \parallel.
    \end{eqnarray*}
    Therefore, $C$ is bounded below. Then, by Proposition \ref{closed}, $R(C)$ is closed. Therefore, $\{A\boxplus B(f_n\otimes g_m)\}_{n,m \in \mathbb{N}}$ is a lower semi-frame for $H_1 \otimes H_2.$
\end{proof}
In the following example, we show that if $B$ is a densely defined closed operator with closed range on $H_2$, $\gamma(A^*)>\gamma(B^*)>0$ and $R(A\boxplus B)$ is dense, then $\{A\boxplus B(f_n\otimes g_m)\}_{n,m \in \mathbb{N}}$ is need not be a lower semi-frame for $H_1 \otimes H_2.$
\begin{eg}
Let $ H_1=H_2=\ell^2(\mathbb N), $ and let $\{e_n\}_{n\in\mathbb N}$ denote the canonical orthonormal basis of $\ell^2(\mathbb N)$. Define $f_n=e_n,  g_m=e_m,  n,m\in\mathbb N. $ Since $\{e_n\}_{n\in\mathbb N}$ is an orthonormal basis, both
$ \{f_n\}_{n\in\mathbb N}  \text{and}  \{g_m\}_{m\in\mathbb N} $ are Parseval frames, and hence lower semi-frames.
Now define bounded diagonal operators $A,B:\ell^2(\mathbb N)\to \ell^2(\mathbb N)$ by 
$$Ae_n = \left(1+\frac{1}{n}\right)e_n, \text{ and } Be_n = -b_ne_n,$$ 
where $ b_1=\frac12,
b_n=1+\frac1n+\frac1{n^2}, n\ge2.$ Since $A$ and $B$ are bounded operators, they are densely defined and closed. Moreover, $ A=A^*,  B=B^* $ and $\gamma(A^*) = \inf_{n\in\mathbb N}\left|1+\frac1n\right|  =1 \text{ and } \gamma(B^*) =\inf_{n\in\mathbb N}|b_n| =
\frac12.$ Hence $ \gamma(A^*)>\gamma(B^*)>0.$ For $n,m\in\mathbb N$, 
\begin{align*}
A \boxplus B(e_n\otimes e_m)
&=
Ae_n\otimes e_m+e_n\otimes Be_m
\\
&=
\left(1+\frac1n\right)e_n\otimes e_m
-b_m(e_n\otimes e_m)
\\
&=
\left(1+\frac1n-b_m\right)(e_n\otimes e_m).
\end{align*}
Thus $\{e_n\otimes e_m\}_{n,m\in\mathbb N}$ is an orthonormal basis consisting of eigenvectors of $A \boxplus B$, with eigenvalues $ \lambda_{n,m} = 1+\frac1n-b_m.$ In particular, for $m=n\ge2$, 
$\lambda_{n,n} =
1+\frac1n-\left(1+\frac1n+\frac1{n^2}\right) = -\frac1{n^2}.$ Hence $|\lambda_{n,n}|=\frac1{n^2}\rightarrow 0. $ On the other hand, $ \lambda_{n,m}\neq0  \text{ forall }n,m\in\mathbb N, $ and therefore $A\boxplus B$ is injective. Since $(A \boxplus B)=(A \boxplus B)^*$, $ N((A \boxplus B)^*)=\{0\}. $ Consequently,
$\overline{R(A \boxplus B)} = N((A \boxplus B)^*)^\perp =
\ell^2(\mathbb N)\otimes \ell^2(\mathbb N),$ that is, $ R(A\boxplus B) $ is dense. Consider the sequence $ \{A \boxplus B(e_n\otimes e_m)\}_{n,m\in\mathbb N}. $ Since $\{e_n\otimes e_m\}_{n,m}$ is an orthonormal basis, $ A \boxplus B(e_n\otimes e_m) = \lambda_{n,m}(e_n\otimes e_m).$ A diagonal sequence of this form is a lower semi-frame if and only if $ \inf_{n,m}|\lambda_{n,m}|>0. $ However, $ \inf_{n,m}|\lambda_{n,m}| = 0, $ because $ |\lambda_{n,n}|=\frac1{n^2}\rightarrow 0. $ Hence $ \{(A\boxplus B)(e_n\otimes e_m)\}_{n,m\in\mathbb N} $ is not a lower semi-frame for $ \ell^2(\mathbb N)\otimes \ell^2(\mathbb N).$
\end{eg}

\begin{theorem}
  Let $ \{f_{n}\}_{n \in \mathbb{N}}$ $\subset H_1, $ and $ \{g_{m}\}_{m \in \mathbb{N}}$ $\subset H_2$ be such that the analysis operators $C_1$ and $C_2$ respectively. Let $A$ be a densely defined on $H_1$ and $B$ be a bounded operator on $H_2$, such that  $R(A^*)=H_1$, $R(B^*)=H_2$ and $\gamma(A^*) >\parallel B \parallel >0$. Then, $ \{f_{n}\}_{n \in \mathbb{N}}$ and $ \{g_{m}\}_{m \in \mathbb{N}}$  are Riesz-Fischer sequences for $H_1$ and $H_2$, respectively if and only if $\{A\boxplus B(f_n\otimes g_m)\}_{n,m \in \mathbb{N}}$ is a Riesz-Fischer sequence for $H_1 \otimes H_2.$
\end{theorem}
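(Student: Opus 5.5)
The plan mirrors the proof of Theorem \ref{4.8}, replacing ``closed range'' by ``surjective'' and using Proposition \ref{1}(iii) to translate the Riesz--Fischer property into surjectivity of the analysis operator.

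\textbf{Factorization.} First write the analysis operator of $\{(A\boxplus B)(f_n\otimes g_m)\}_{n,m\in\mathbb{N}}$ in factored form. On elementary tensors,
$$\langle f\otimes g,(A\boxplus B)(f_n\otimes g_m)\rangle=\langle (A\boxplus B)^*(f\otimes g),f_n\otimes g_m\rangle,$$
so
$$C=U\,(C_1\otimes C_2)\,(A\boxplus B)^*,\qquad (A\boxplus B)^*\supseteq A^*\otimes I+I\otimes B^*=A^*\boxplus B^*.$$
Since $U$ is unitary, $R(C)=\ell^2(\mathbb{N}\times\mathbb{N})$ if and only if $R\big((C_1\otimes C_2)(A^*\boxplus B^*)\big)=\ell^2(\mathbb{N})\otimes\ell^2(\mathbb{N})$.

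\textbf{Surjectivity of $A^*\boxplus B^*$.} The key step is to show that $A^*\boxplus B^*$ maps its domain onto $H_1\otimes H_2$. Since $R(A^*)=H_1$ and $\gamma(A^*)>0$, the operator $A^*$ is bounded below on $N(A^*)^\perp$. Together with $\|B\|<\gamma(A^*)$, one can write
$$A^*\boxplus B^*=(A^*\otimes I)\big(I+(A^*\otimes I)^{\dagger}(I\otimes B^*)\big)$$
on a suitable complement, where $(A^*)^\dagger$ denotes the bounded right inverse of $A^*$ with norm $\gamma(A^*)^{-1}$. This gives $\|(A^*\otimes I)^\dagger(I\otimes B^*)\|\le \|B\|/\gamma(A^*)<1$. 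A Neumann series then shows that the middle factor is invertible, so $A^*\boxplus B^*$ is surjective. This is the perturbation argument underlying the estimate $\gamma(A^*)-\|B\|>0$ in Theorem \ref{4.8}. The hypothesis $R(B^*)=H_2$ is not strictly needed here, but it keeps the situation symmetric with the tensor product case.

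\textbf{Forward direction.} If $\{f_n\}$ and $\{g_m\}$ are Riesz--Fischer, then $C_1$ and $C_2$ are surjective by Proposition \ref{1}(iii). By the converse part of Theorem \ref{4.4}, $C_1\otimes C_2$ has range $\ell^2(\mathbb{N})\otimes\ell^2(\mathbb{N})$. Composing with the surjective $A^*\boxplus B^*$ gives that $C$ is surjective, so the tensor-sum sequence is Riesz--Fischer.

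\textbf{Converse.} If $C$ is surjective, then $R(C)\subseteq U\,R(C_1\otimes C_2)$ forces $R(C_1\otimes C_2)=\ell^2(\mathbb{N})\otimes\ell^2(\mathbb{N})$. Then one applies the argument of Theorem \ref{4.4} to $C_1\otimes C_2$. Suppose $R(C_1)\neq\ell^2(\mathbb{N})$, and pick $x_0\neq 0$ orthogonal to $R(C_1)$ (using that $R(C_1)$ is closed by Theorem \ref{3c}). Then $x_0\otimes y$ is orthogonal to $R(C_1\otimes C_2)$ for every $y$, which is a contradiction. Hence $C_1$ is surjective, and symmetrically $C_2$ is surjective.

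\textbf{Main obstacle.} I expect the hard step to be the second one. It requires making rigorous the claim that the unbounded $A^*\otimes I$ plus the bounded perturbation $I\otimes B^*$ remains surjective on the algebraic-tensor domain. It also requires identifying $(A\boxplus B)^*$ with $A^*\boxplus B^*$ well enough to do this. I would first handle these via closures, working with the closure of $A^*\otimes I$, which is bounded below on $N^\perp$ with the same constant $\gamma(A^*)$, and then apply the Neumann series.
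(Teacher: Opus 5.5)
Your forward direction is sound. $(A\boxplus B)^*$ is closed and extends $A^*\otimes I+I\otimes B^*$, so the bounded right inverse $(A^*)^\dagger$ (norm at most $\gamma(A^*)^{-1}$) and the Neumann series for $I+(A^*)^\dagger\otimes B^*$ make $(A\boxplus B)^*$ onto. For $x\in\mathrm{Dom}((A\boxplus B)^*)$ one has $\langle x,(A\boxplus B)(f_n\otimes g_m)\rangle=\langle (A\boxplus B)^*x,f_n\otimes g_m\rangle$, so surjectivity passes to $C$. This is more careful than the paper's appeal to the estimate of Theorem \ref{4.8}, which tacitly uses $\|A^*f\|\ge\gamma(A^*)\|f\|$ for all $f$, i.e.\ injectivity of $A^*$; that is not assumed here.

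The gap is in the converse, at the inclusion $R(C)\subseteq U\,R(C_1\otimes C_2)$, which is also the step the paper's proof relies on. The factorization $C=U(C_1\otimes C_2)(A\boxplus B)^*$ is valid only on $\mathrm{Dom}((A\boxplus B)^*)$. The analysis operator $C$, however, lives on the maximal domain $\{x:\sum_{n,m}|\langle x,(A\boxplus B)(f_n\otimes g_m)\rangle|^2<\infty\}$, which can be strictly larger when $A$ is unbounded. So $C$ is only an extension, and its range can exceed that of the analysis operator of $\{f_n\otimes g_m\}$.

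In fact this direction is false under the stated hypotheses. Take $H_1=H_2=\ell^2(\mathbb{N})$ and let $Ae_n=ne_n$ on its maximal domain; then $A$ is self-adjoint, $R(A^*)=H_1$ and $\gamma(A^*)=1$. Let $B=\frac12 I$, so $R(B^*)=H_2$ and $1>\|B\|>0$. Put $f_n=\frac1n e_n\in\mathrm{Dom}(A)$ and $g_m=e_m$. Then $(A\boxplus B)(f_n\otimes g_m)=(1+\frac1{2n})\,e_n\otimes e_m$. This is an orthogonal family with norms in $[1,\frac32]$, hence Riesz--Fischer: its analysis operator is everywhere defined and onto. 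But $\|f_n\|=\frac1n\to0$, so $\{f_n\}$ is not Riesz--Fischer. Indeed $R(C_1)=\{y:\sum n^2|y_n|^2<\infty\}\neq\ell^2(\mathbb{N})$, although $R(C)=\ell^2(\mathbb{N}\times\mathbb{N})$.

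So no argument can close this step without an extra hypothesis. If $A$ is bounded, your step becomes legitimate. Then $(A\boxplus B)^*=A^*\otimes I+I\otimes B^*$ is everywhere defined and bounded, so for every $x\in\mathrm{Dom}(C)$, $Cx$ is the image of $(A\boxplus B)^*x$ under the analysis operator of $\{f_n\otimes g_m\}$. Equivalently, $\|\sum c_{nm}f_n\otimes g_m\|\ge(\|A\|+\|B\|)^{-1}\|\sum c_{nm}(A\boxplus B)(f_n\otimes g_m)\|$, and Theorem \ref{4.4} then finishes.
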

\begin{proof}
    Let $ \{f_{n}\}_{n \in \mathbb{N}}$ and $ \{g_{m}\}_{m \in \mathbb{N}}$  be Riesz-Fischer sequences for $H_1$ and $H_2$, respectively. Then $C_1$ and $C_2$ are surjective. Therefore, $C_1 \otimes C_2$ is also surjective. Since $A^*, B^*$  are surjective, $A^* \otimes I_{H_1}, I_{H_2} \otimes B^*$ are so. From our assumption $\gamma(A^*) >\parallel B \parallel>0$, we can conclude that $A \boxplus B$ is injective. Therefore, $R(A^* \boxplus B^*)$ is dense and as in the proof of Theorem \ref{4.8}, we can conclude that $R(A^* \boxplus B^*)$ is closed, which implies that $R(A^* \boxplus B^*)= H_1 \otimes H_2.$ Therefore, $C$ is surjective.\\
    \indent Conversely, if $\{A\boxplus B(f_n\otimes g_m)\}_{n,m \in \mathbb{N}}$ is a Riesz-Fischer sequence for $H_1 \otimes H_2.$ Then, $C$ is surjective. This gives that $C_1 \otimes C_2$ is surjective. Then by Theorem \ref{4.4}, $C_1$ and $C_2$ are surjective. Therefore, $ \{f_{n}\}_{n \in \mathbb{N}}$ and $ \{g_{m}\}_{m \in \mathbb{N}}$  are Riesz-Fischer sequences for $H_1$ and $H_2$. 
\end{proof}
\section{Frame related sequences in infinite tensor product of Hilbert spaces}
This section extends the study to infinite tensor products in the sense of von Neumann. We identify conditions under which frame-related properties continue to hold in this setting. Examples illustrate situations where preservation fails, highlighting the need for additional structural assumptions.
Let $\{\otimes _{n \in \mathbb{N}} f_{nm} \}_{m \in \mathbb{N}}$ be a sequence in $(\otimes _{n \in \mathbb{N}}H_n, \Omega_n)$. The analysis operator for this sequence is $C: Dom(C) \subseteq \otimes _{n \in \mathbb{N}}H_n \rightarrow \ell^2(\mathbb{N})$ defined by $$C(\otimes _{n \in \mathbb{N}}x_n) = \{\langle \otimes _{n \in \mathbb{N}}x_n, \otimes _{n \in \mathbb{N}}f_{nm}\rangle\}_{m \in \mathbb{N}} = \left\{\prod_{n\in \mathbb{N}} \langle x_n, f_{nm} \rangle \right\}_{m \in \mathbb{N}}.$$
\begin{theorem}
    Let $\{\otimes _{n \in \mathbb{N}} f_{nm} \}_{m \in \mathbb{N}}$ be a sequence in $(\otimes _{n \in \mathbb{N}}H_n, \Omega_n)$. $\{\otimes _{n \in \mathbb{N}} f_{nm} \}_{m \in \mathbb{N}}$ is a complete sequence if and only if $\{f_{nm}\}_{m \in \mathbb{N}}$ is a complete sequence for all $n \in \mathbb{N}.$
\end{theorem}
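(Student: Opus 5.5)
The plan is to imitate the proof of Theorem \ref{3a}, working throughout with the analysis operator $C$ of $\{\otimes_{n \in \mathbb{N}} f_{nm}\}_{m \in \mathbb{N}}$ and with the analysis operators $C_n$ of the component sequences $\{f_{nm}\}_{m \in \mathbb{N}}$ in $H_n$. By Proposition \ref{1}(i), the statement is equivalent to: $C$ is injective if and only if every $C_n$ is injective. The infinite product must be brought under control first, so I would begin with a finite reduction. Every vector in the algebraic part of $(\otimes_{n\in\mathbb{N}} H_n,\Omega_n)$ is a finite sum of elementary tensors $\otimes_n x_n$ with $x_n=\Omega_n$ for $n>N$. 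For such vectors the inner product with $\otimes_n f_{nm}$ splits as
\[
\prod_{n\le N}\langle x_n,f_{nm}\rangle \cdot \prod_{n>N}\langle \Omega_n,f_{nm}\rangle .
\]
The tail factor plays the role of the reference vector, so on this block $C$ behaves like the analysis operator of a finite tensor sequence in $H_1\otimes\cdots\otimes H_N$, multiplied by a fixed tail weight.

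\textbf{Necessity.} Assume $C$ is injective and suppose some $C_k$ is not, say $0\neq x_k\in N(C_k)$. I would form the elementary tensor $x=\otimes_n x_n$ with this $x_k$ in slot $k$ and $x_n=\Omega_n$ for $n\neq k$. Since $\|\Omega_n\|=1$, we have $x\neq 0$. Every coordinate of $Cx$ contains the factor $\langle x_k,f_{km}\rangle=0$, so $Cx=0$, contradicting injectivity of $C$. This is exactly the argument of Theorem \ref{3a}, where $g\notin N(C_2)$ is replaced by the reference vectors. The step is routine provided the remaining factors are allowed to be arbitrary, since the vanishing comes from the single factor $\langle x_k,f_{km}\rangle=0$.

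\textbf{Sufficiency.} Assume each $C_n$ is injective. For each $N$ I would apply Theorem \ref{3a} inductively, using $H_1\otimes\cdots\otimes H_N\cong (H_1\otimes\cdots\otimes H_{N-1})\otimes H_N$, to conclude that the analysis operator of the finite tensor family is injective on the $N$-th block. Because the blocks increase and their union is dense in $(\otimes_n H_n,\Omega_n)$, I would then pass to the completion. The argument would take $x\perp \otimes_n f_{nm}$ for all $m$, project $x$ onto the $N$-th block, show that each projection vanishes, and let $N\to\infty$.

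\textbf{Main obstacle.} I expect the sufficiency direction to be the hard part, for two reasons. First, the family is indexed by a single $m$, so $C$ is a diagonal restriction of the full tensor analysis operator rather than $C_1\otimes C_2\otimes\cdots$ composed with a unitary $U$ as in the finite case. The factorization $C=U\circ(\otimes_n C_n)$ therefore does not directly apply, and injectivity of the full tensor operator does not transfer to its diagonal part. Second, the tail products $\prod_{n>N}\langle\Omega_n,f_{nm}\rangle$ may vanish or fail to converge. My first attempt would be to identify $C$ with the composition of $\otimes_n C_n$ and the restriction map to the diagonal, as the paper does implicitly. I would then check whether injectivity survives that restriction. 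If it does not, the sufficiency direction needs an additional hypothesis, for instance that the tails $\prod_{n>N}\langle\Omega_n,f_{nm}\rangle$ are nonzero and that the diagonal indexing exhausts the relevant index combinations. I would state such a hypothesis explicitly rather than assume it silently.
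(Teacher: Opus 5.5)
\textbf{Verdict: there is a genuine gap, and it cannot be filled, because the ``if'' direction of the statement is false. The ``only if'' half of your proposal is correct.}

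Your necessity argument is correct and matches the paper's. Placing $\Omega_n$ in the other slots is cleaner than the paper's unspecified $u_n$, since it guarantees that $x$ lies in the space and that $x\neq 0$.

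The sufficiency plan breaks exactly where you feared. The step ``apply Theorem~\ref{3a} inductively to get injectivity on the $N$-th block'' is invalid. Theorem~\ref{3a} concerns the family indexed by \emph{all} pairs of indices, whereas here only the diagonal terms $f_{1m}\otimes\cdots\otimes f_{Nm}$ occur. Here is a concrete counterexample.
Let $H_1=H_2=\ell^2(\mathbb{N})$ with $\Omega_1=\Omega_2=e_1$ and $f_{1m}=f_{2m}=e_m$, and let $H_n=\mathbb{C}$ with $\Omega_n=f_{nm}=1$ for $n\ge 3$. Every component sequence is complete. The space is canonically $\ell^2(\mathbb{N})\otimes\ell^2(\mathbb{N})$ and the tensor sequence is $\{e_m\otimes e_m\}_{m\in\mathbb{N}}$. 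The nonzero vector $e_1\otimes e_2$ is orthogonal to all of it.
If you prefer all $H_n$ equal, take $H_n=\mathbb{C}^2$ and $\Omega_n=e_1$. Let $f_{n1}=e_1$ for all $n$, and for $m\ge 2$ let $\otimes_n f_{nm}$ have $e_2$ in slot $m-1$ and $e_1$ elsewhere. Each $\{f_{nm}\}_m$ contains $e_1$ and $e_2$, so it is complete. Yet $e_2\otimes e_2\otimes e_1\otimes e_1\otimes\cdots$ is orthogonal to every term, since each term has $e_2$ in at most one slot.

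So replace your hedged last paragraph by such a counterexample and conclude that only the ``only if'' direction holds. Of the two extra hypotheses you float, nonvanishing tails does not help: in the first example $\prod_{n>N}\langle\Omega_n,f_{nm}\rangle=1$ for all $N\ge 2$. The second one is the real issue, namely that the indices should range over all combinations $(m_1,\dots,m_N)$. For the family $\{f_{1m_1}\otimes\cdots\otimes f_{Nm_N}\otimes\bigotimes_{n>N}\Omega_n\}$, taken over all $N$ and all $(m_1,\dots,m_N)$, your block argument does go through. The $N$-fold version of Theorem~\ref{3a} shows that $x$ has zero projection onto $H_1\otimes\cdots\otimes H_N\otimes\bigotimes_{n>N}\Omega_n$, and these increasing subspaces have dense union.

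Be aware that the paper's own proof of sufficiency is invalid, for two reasons. From $\langle x_N,f_{Nm}\rangle\neq 0$ for a single $N$ it infers $\prod_n\langle x_n,f_{nm}\rangle\neq 0$; that would require one $m$ making every factor nonzero simultaneously. It also tests only elementary tensors $\otimes_n x_n$, whereas completeness has to be checked against every vector of the completed space.
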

\begin{proof}
    Let $\{\otimes _{n \in \mathbb{N}} f_{nm} \}_{m \in \mathbb{N}}$ be a complete sequence in $(\otimes _{n \in \mathbb{N}}H_n, \Omega_n)$. Suppose that $\{f_{nm}\}_{m \in \mathbb{N}}$ is not complete for some $n \in \mathbb{N}$. Without loss of generality, consider that $\{f_{1m}\}_{_{m \in \mathbb{N}}}$ is not complete in $H_1$ and the analysis operator of $\{f_{1m}\}_{m \in \mathbb{N}}$ is $C_1.$ Then $C_1$ is not injective. Let $x_1 \in N(C_1)$, then$ \{\langle x_1, f_{1m}\rangle\}_{m \in \mathbb{N}} = 0$ and $\{\prod_{n\in \mathbb{N}}\langle x_1 \otimes_{n \in \mathbb{N} \backslash \{1\}} u_n, f_{nm} \rangle\}_{m \in \mathbb{N}} = 0$, which contradicts our assumption. \\
     \indent Conversely, assume that $\{f_{nm}\}_{m \in \mathbb{N}}$ is complete for each $n \in \mathbb{N}$. Let $\otimes_{n \in \mathbb{N}} x_n  \neq 0 \in \otimes_{n \in \mathbb{N}} (H_n, \Omega _n) $, since $\{f_{nm}\}_{m \in \mathbb{N}}$ is complete for each $n \in \mathbb{N}$, there exists $f_{N_xm} \in H_N$ such that $\langle x_N, f_{N_xm}\rangle \neq 0$. Then $\prod _{n=1}^{\infty} \langle x_n , f_{N_xm} \rangle \neq 0.$ Therefore, $\{ \langle \otimes_{n \in \mathbb{N} }x_n, \otimes f_{nm}\rangle\}_{r, m \in \mathbb{N}} \neq 0.$ This implies that $\{\otimes_{n \in \mathbb{N}} f_{nm}\}_{r, m \in \mathbb{N}}$ is a complete sequence in $\otimes_{n \in \mathbb{N}} H_n$.
\end{proof}
\begin{theorem}\label{5.2}
     Let $\{\otimes _{n \in \mathbb{N}} f_{nm} \}_{m \in \mathbb{N}}$ be a sequence in $(\otimes _{n \in \mathbb{N}}H_n, \Omega_n)$ such that $f_{nm} \neq 0$ for all $m \in \mathbb{N}$. If $\{\otimes _{n \in \mathbb{N}} f_{nm} \}_{m \in \mathbb{N}}$ is a Bessel sequence, then $\{f_{nm}\}_{m \in \mathbb{N}}$ is a Bessel sequence for all $n \in \mathbb{N}.$
\end{theorem}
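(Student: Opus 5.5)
The plan is to reduce the Bessel inequality for a single component to the Bessel inequality for the tensor sequence, evaluated on elementary test tensors. Fix an index $k \in \mathbb{N}$ and let $B$ be a Bessel bound for $\{\otimes_{n \in \mathbb{N}} f_{nm}\}_{m \in \mathbb{N}}$. For $x_k \in H_k$ and unit vectors $u_n \in H_n$ ($n \neq k$), with $u_n=\Omega_n$ for all but finitely many $n$, I would form the elementary tensor $X = x_k \otimes \bigotimes_{n \neq k} u_n$. It has norm $\|x_k\|$, and the analysis operator $C$ of Section 4 gives
\[
\sum_{m\in\mathbb{N}} |\langle x_k, f_{km}\rangle|^2 \prod_{n\neq k} |\langle u_n, f_{nm}\rangle|^2 \;\leq\; B\,\|x_k\|^2 .
\]
So if I can choose the $u_n$ so that
\[
\delta := \inf_{m\in\mathbb{N}} \prod_{n\neq k} |\langle u_n, f_{nm}\rangle| > 0,
\]
then $\sum_m |\langle x_k, f_{km}\rangle|^2 \leq (B/\delta^2)\|x_k\|^2$ for every $x_k \in H_k$, and $\{f_{km}\}_{m}$ is Bessel with bound $B/\delta^2$. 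Since $k$ is arbitrary, this would prove Theorem \ref{5.2}.

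The first concrete step is to use the structure of the von Neumann product, as the paper sets it up. The vectors $\otimes_n f_{nm}$ are elementary tensors of the incomplete product, so $f_{nm}=\Omega_n$ for all large $n$. I would take $u_n=\Omega_n$ for those $n$, so that the corresponding factors equal $1$. This leaves finitely many coordinates $n \neq k$ in which $f_{nm}$ may differ from $\Omega_n$. The hypothesis $f_{nm}\neq 0$ is exactly what guarantees that, for each $n$ and $m$, the set of unit vectors $u_n$ with $\langle u_n,f_{nm}\rangle \neq 0$ is open and dense in the unit sphere. Hence a Baire-category choice makes every individual factor nonzero.

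The main obstacle is upgrading this pointwise nonvanishing to the uniform-in-$m$ lower bound $\delta>0$. Two ingredients are needed.
\begin{enumerate}[(i)]
\item \emph{A uniform cutoff.} The number of coordinates in which $f_{nm}\neq\Omega_n$ must be bounded by some $N$ independent of $m$. I would try to extract this, and also a uniform control of $\|f_{nm}\|$, from the Bessel hypothesis itself: testing on $X=\otimes_n f_{nm_0}/\|\otimes_n f_{nm_0}\|$ gives $\|\otimes_n f_{nm_0}\|^2\le B$ for every $m_0$.
\item \emph{A uniform choice of test vectors.} For $n\le N$, $n\neq k$, I need $|\langle u_n,f_{nm}\rangle|$ bounded below uniformly in $m$. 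If no single choice of $u_n$ achieves this, I would use finitely many test tensors. I would pick finitely many unit vectors $u_n^{(1)},\dots,u_n^{(r)}$ so that for each $m$ at least one gives $|\langle u_n^{(j)},f_{nm}\rangle|\ge c\,\|f_{nm}\|$, and then sum the corresponding finitely many Bessel inequalities.
\end{enumerate}
Ingredient (ii) is where I expect the real difficulty. The Bessel bound of the tensor sequence only controls products of the norms $\|f_{nm}\|$, so without further assumptions the factors $\prod_{n\neq k}\|f_{nm}\|$ could tend to $0$ while $\|f_{km}\|$ grows. If this happens, the argument only yields a Bessel estimate for the rescaled sequence $\{(\prod_{n\neq k}\|f_{nm}\|) f_{km}\}_m$. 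I would therefore check carefully whether the nonvanishing hypothesis, combined with the unit-norm reference vectors $\Omega_n$, forces $\inf_m \prod_{n\neq k}\|f_{nm}\|>0$. If it does not, I would record this as the additional condition under which the argument above closes.
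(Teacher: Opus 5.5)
The step that fails is exactly the one you isolate, and it cannot be repaired. The uniform bound $\delta>0$ does not follow from the hypotheses, and the statement is false as written, so no choice of test tensors will work.

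Scaling between factors already breaks it. Take all $H_n=\mathbb{C}$ with $\Omega_n=1$, and set $f_{1m}=1$, $f_{2m}=1/m$, and $f_{nm}=1$ for $n\geq 3$. Every $f_{nm}$ is nonzero. The product space is unitarily equivalent to $\mathbb{C}$ via $\otimes_n c_n\mapsto\prod_n c_n$, and under this map $\otimes_n f_{nm}\mapsto 1/m$. Hence $\sum_m|\langle z,1/m\rangle|^2=\frac{\pi^2}{6}|z|^2$, so the tensor sequence is Bessel, whereas $\{f_{1m}\}_m=\{1,1,\dots\}$ is not. So the check you defer comes out negative: nonvanishing does not force $\inf_m\prod_{n\neq k}\|f_{nm}\|>0$.

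Your displayed inequality is correct, and it is the honest form of the paper's one-line argument. The paper writes $\sum_m|\langle x_1\otimes_{n\neq 1}\Omega_n,\,f_{1m}\otimes_{n\neq1}\Omega_n\rangle|^2\leq B\|x_1\|^2$, i.e.\ it silently replaces the given vectors $\otimes_n f_{nm}$ by $f_{1m}\otimes_{n\neq1}\Omega_n$. That substitution is valid only when $f_{nm}=\Omega_n$ for all $n\neq1$, which is the case $u_n=\Omega_n$, $\delta=1$ of your scheme.

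Your proposed fallback, adding $\inf_m\prod_{n\neq k}\|f_{nm}\|>0$ as a hypothesis, is still not enough, because ingredient (ii) fails in infinite dimensions. Let $H_1=H_2=\ell^2(\mathbb{N})$ with $\Omega_1=\Omega_2=e_1$, and $H_n=\mathbb{C}$ with $\Omega_n=1$ for $n\geq 3$. Put $f_{1m}=e_1$, $f_{2m}=e_m$, and $f_{nm}=1$ for $n\geq3$.

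All factors are unit vectors, and each $\otimes_n f_{nm}$ differs from the reference sequence in at most one coordinate. The product space is unitarily $\ell^2(\mathbb{N})\otimes\ell^2(\mathbb{N})$ with $\otimes_n f_{nm}\mapsto e_1\otimes e_m$. This is an orthonormal sequence, hence Bessel with bound $1$. Yet $\{f_{1m}\}_m=\{e_1,e_1,\dots\}$ is not Bessel. Since $e_m\to 0$ weakly, no finite family of unit vectors $u_2^{(j)}$ satisfies $\max_j|\langle u_2^{(j)},e_m\rangle|\geq c>0$ for all $m$.

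What your inequality genuinely proves is that $\{|\alpha_m|\,f_{km}\}_m$, with $\alpha_m=\prod_{n\neq k}\langle u_n,f_{nm}\rangle$, is Bessel for every admissible choice of the $u_n$. The conclusion for $\{f_{km}\}_m$ itself requires an extra hypothesis on these inner products rather than on norms. An example is $\inf_m|\alpha_m|>0$ for some admissible choice, as when $f_{nm}=\Omega_n$ for all $n\neq k$.
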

\begin{proof}
   Assume that $\{\otimes _{n \in \mathbb{N}} f_{nm} \}_{m \in \mathbb{N}}$ is a Bessel sequence in $(\otimes _{n \in \mathbb{N}}H_n, \Omega_n)$ with Bessel bound $B$. 
    Let $\Omega_n$ be a unit vector in $H_n$. Let $x \in H_1$, then $x\otimes _{n\in \mathbb{N} \backslash\{1\}} \Omega _n \in  \otimes _{n \in \mathbb{N}}H_n$. 
    $$ \sum_{_{m \in \mathbb{N}}}|\langle x_1,f_{1m}\rangle|^2 = \sum_{_{m \in \mathbb{N}}}|\langle x_1\otimes _{n\in \mathbb{N} \backslash\{1\}}\Omega _n, f_{1m}\otimes _{n\in \mathbb{N} \backslash\{1\}}\Omega _n \rangle|^2 \leq B \| x_1\|^2. $$
    Therefore $\{f_{1m}\}_{m \in \mathbb{N} }$ is a Bessel sequence for $H_1$. Similarly, we can extend our proof for all $n \in \mathbb{N}$. This completes the proof.    
\end{proof}
The converse of Theorem \ref{5.2} need not be true. Consider the following example.
\begin{eg}
    Let $H_n=\mathbb{C}$ over $\mathbb{R}$ for all $n \in \mathbb{N}$ and $\Omega=(1,0)\otimes (1,0)\otimes ...$. Therefore, $(\otimes_{n\in \mathbb{N}} H_n, \Omega_n)$ is a Hilbert space. 
Let $F_n=\{(2,0),(0,2)\}$ for all $ n\in \mathbb{N},$ 
then $F_n$ is a Bessel sequence for $H_n$ with upper bound 4.
But $\otimes_{n\in \mathbb{N}} F_n$ is not a Bessel sequence in $(\otimes _{n \in \mathbb{N}}H_n, \Omega).$
Suppose, if $\otimes_{n\in \mathbb{N}} F_n$ is a Bessel sequence with bound B,
then there exists $N\in \mathbb{N}$ such that 
$B=4^N.$
We take an element in  $\otimes_{n\in \mathbb{N}} H_n$ such that 
$\otimes_{i=1}^{N+1} (0,2) \otimes_{i=N+2}^{\infty} (1,0)$ then
$\|\otimes_{n\in \mathbb{N}}x_n\|^2 = 4^{N+1}$, which is a contradiction.
\end{eg}
\begin{remark} 
If $\{f_{nm}\}_{m \in \mathbb{N}}$ is a Bessel sequence with Bessel bound $\beta_n = 1$ for all $ n\in \mathbb{N},$ then $\{\otimes _{n \in \mathbb{N}} f_{nm} \}_{m \in \mathbb{N}}$ is a Bessel sequence for $(\otimes _{n \in \mathbb{N}}H_n, \Omega_n).$
\end{remark}

\begin{theorem}\label{5.4}
Let $\{\otimes _{n \in \mathbb{N}} f_{nm} \}_{m \in \mathbb{N}} \subset H_n. $ If $\{\otimes _{n \in \mathbb{N}} f_{nm} \}_{m \in \mathbb{N}}$ is a lower semi-frame for $(\otimes _{n \in \mathbb{N}}H_n, \Omega_n)$, then $\{f_{nm}\}_{m \in \mathbb{N}}$ is a lower semi-frame for $ H_n$ for all $ n\in \mathbb{N}. $
\end{theorem}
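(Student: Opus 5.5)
The proof will follow the pattern of Theorem \ref{5.2}: embed each factor $H_k$ into the von Neumann infinite tensor product by padding with the reference vectors, and then transfer the two defining properties of a lower semi-frame (completeness and the lower frame inequality) from the tensor sequence to the factor sequence. Fix $k \in \mathbb{N}$. For $x \in H_k$, let $\iota_k(x)$ denote the elementary tensor whose $k$-th factor is $x$ and whose other factors are $\Omega_n$ for $n\neq k$. This is an admissible element of $(\otimes_{n\in\mathbb{N}}H_n,\Omega_n)$. Since $\|\Omega_n\|=1$, we have $\|\iota_k(x)\| = \|x\|$, so $\iota_k$ is a linear isometry. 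Moreover,
$$\langle \iota_k(x), \otimes_{n\in\mathbb{N}} f_{nm}\rangle = \langle x, f_{km}\rangle \prod_{n\neq k}\langle \Omega_n, f_{nm}\rangle =: \langle x, f_{km}\rangle\, c_m .$$

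\textbf{Completeness.} This step needs no new work: the preceding theorem on complete sequences in the infinite tensor product already shows that completeness of $\{\otimes_{n} f_{nm}\}_{m}$ forces completeness of each $\{f_{km}\}_{m}$. Alternatively, if $x\in N(C_k)$, then $\iota_k(x)\in N(C)$, so $x=0$ by the injectivity of $C$ (Proposition \ref{1}(i)).

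\textbf{Lower bound.} Let $A$ be a lower bound for the tensor sequence. Applying the lower frame inequality to $\iota_k(x)$ gives
$$A\|x\|^2 \le \sum_m |c_m|^2\,|\langle x, f_{km}\rangle|^2 .$$
If $\sup_m |c_m| = M_k<\infty$, the right-hand side is at most $M_k^2\sum_m|\langle x,f_{km}\rangle|^2$, which yields the lower bound $A/M_k^2$ for $\{f_{km}\}_m$. (If the sum diverges, the inequality is trivial.)

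\textbf{Main obstacle.} The difficulty is controlling the weights $c_m=\prod_{n\ne k}\langle\Omega_n,f_{nm}\rangle$. Because the $f_{nm}$ equal $\Omega_n$ for all but finitely many $n$, each $c_m$ is a finite product, but the products need not be uniformly bounded in $m$. To get around this, I would first try replacing $\Omega_n$ in the slots $n\neq k$ by other unit vectors $u_n$ (equal to $\Omega_n$ for all but finitely many $n$). These can be chosen so that the weights stay bounded, for example by restricting to those $m$ for which $f_{nm}=\Omega_n$ outside a fixed finite set. If that fails, I would fall back on the closed-range characterisation in Proposition \ref{1}(ii). Precomposing $C$ with the isometry $\iota_k$ gives the operator $x\mapsto\{c_m\langle x,f_{km}\rangle\}_m$, which is injective with closed range. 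I would then need an extra boundedness assumption on the $c_m$ (boundedness from above suffices) to deduce that $C_k$ has closed range. I expect the bounded-weight step to be the genuinely delicate point, and I would state the needed boundedness explicitly if it cannot be derived.
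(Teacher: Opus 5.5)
Your transfer of completeness is correct. So is the identity $\langle\iota_k(x),\otimes_n f_{nm}\rangle=c_m\langle x,f_{km}\rangle$, and so is the conditional lower bound $A/M_k^2$ when $M_k=\sup_m|c_m|<\infty$.

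The genuine gap is that this boundedness is never obtained, and your proposed repairs do not supply it:
\begin{itemize}
\item Replacing $\Omega_n$ by other unit vectors $u_n$ only produces new weights $\prod_{n\neq k}\langle u_n,f_{nm}\rangle$. These still range over all $m$, and there is no reason for them to be bounded.
\item You cannot ``restrict to those $m$'' with $f_{nm}=\Omega_n$ off a fixed finite set. Discarding terms from the right-hand side of $A\|x\|^2\le\sum_m(\cdots)$ is not permitted.
\item The closed-range fallback needs the same bound as an extra hypothesis.
\end{itemize}

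In fact no argument can close this gap, because the statement is false as written. Let $H_1=\ell^2(\mathbb{N})$ with orthonormal basis $\{e_m\}_{m\in\mathbb{N}}$ and $\Omega_1=e_1$. Let $H_n=\mathbb{C}$ with $\Omega_n=1$ for $n\geq 2$. Put $f_{1m}=\frac1m e_m$, $f_{2m}=m$, and $f_{nm}=1$ for $n\geq3$.
\begin{itemize}
\item By multilinearity, $\otimes_n f_{nm}=e_m\otimes1\otimes1\otimes\cdots$.
\item The map $x\otimes1\otimes1\otimes\cdots\mapsto x$ is a unitary from $(\otimes_n H_n,\Omega_n)$ onto $H_1$. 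So the tensor sequence is an orthonormal basis, hence a lower semi-frame with bound $1$.
\item Yet $\sum_m|\langle e_N,f_{1m}\rangle|^2=1/N^2\to0$. So $\{f_{1m}\}_m$ is complete but admits no lower frame bound.
\end{itemize}
Here $c_m=\langle\Omega_2,f_{2m}\rangle=m$, which is exactly the unbounded weight you were worried about. Rescaling two factors of a product orthonormal basis $\otimes_n e^{(n)}_{i_n}$ in the same way gives counterexamples in which every $H_n$ is infinite-dimensional.

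The paper's proof uses the same test vector $x_1\otimes_{n\neq1}\Omega_n$. However, it writes the coefficients as $\langle x_1\otimes_{n\neq1}\Omega_n, f_{1m}\otimes_{n\neq1}\Omega_n\rangle$. That is, it silently replaces the given sequence $\otimes_n f_{nm}$ by the different sequence $f_{1m}\otimes_{n\neq1}\Omega_n$. This amounts to setting $c_m=1$, and it is precisely the step your computation shows to be unjustified.

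So your analysis is sharper than the paper's, but what it proves is a corrected theorem, not the stated one. Completeness always passes to the factors. Moreover, $\{f_{km}\}_m$ is a lower semi-frame with bound $A/M_k^2$ provided $M_k=\sup_m|c_m|<\infty$. This holds, for example, when $\sup_m\prod_{n\neq k}\|f_{nm}\|<\infty$, and in particular when $f_{nm}=\Omega_n$ for all $n\neq k$. That hypothesis must be added to the statement explicitly; it cannot be derived.
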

\begin{proof}
    Assume that $\{\otimes _{n \in \mathbb{N}} f_{nm} \}_{m \in \mathbb{N}}$ is a lower semi-frame with bound A. Let $x_1 \in H_n. $ Then  $$ \sum_{m \in \mathbb{N}}|\langle x_1,f_{1m}\rangle|^2 = \sum_{m \in \mathbb{N}}|\langle x_1\otimes _{n\in \mathbb{N} \backslash\{1\}}\Omega _n, f_{1m}\otimes _{n\in \mathbb{N} \backslash\{1\}}\Omega _n \rangle|^2 \geq A \| x_1\|^2 $$ which implies $\{f_{1m}\}_{_{m \in \mathbb{N}}}$ is a lower semi-frame for $H_1. $
    Similarly, we can prove it for all ${n \in \mathbb{N}}.$
\end{proof}
The converse of the Theorem \ref{5.4} need not be true. Consider the following example.
\begin{eg}
    Let $H_n=\mathbb{C}$ over $\mathbb{R}$ for all $n \in \mathbb{N}$ and $\Omega=(1,0)\otimes (1,0)\otimes ...$. Therefore, $(\otimes_{n\in \mathbb{N}} H_n, \Omega_n)$ is a Hilbert space 
Let $F_n=\{(\frac{1}{2},0),(0,\frac{1}{2})\}$ for all $ n\in \mathbb{N}.$ 
Then $F_n$ is a lower semi-frame with lower bound $\frac{1}{4}.$  
But $\otimes_{n\in \mathbb{N}} F_n$ is not a lower semi-frame in $(\otimes _{n \in \mathbb{N}}H_n, \Omega_n).$
If $\otimes_{n\in \mathbb{N}} F_n$ is lower semi-frame with bound A,
then there exists $n\in \mathbb{N}$ such that 
$A=\frac{1}{4^n}.$
We take an element in  $\otimes_{n\in \mathbb{N}} H_n$ such that 
$\otimes_{i=1}^{n+1} (0,\frac{1}{2}) \otimes_{i=n+1}^{\infty} (1,0),$ then
$\|\otimes_{n\in \mathbb{N}}x_n\|^2 = \frac{1}{4^{n+1}}$, which is a contradiction.
\end{eg}
\begin{remark} 
Let $\{f_{nm}\}_{m \in \mathbb{N}}$ be a sequence in $H_n$ for all $n \in \mathbb{N}.$ If $\{f_{nm}\}_{m \in \mathbb{N}}$ is a lower semi-frame with lower bound $\alpha_n \geq 1$ for all $ n\in \mathbb{N},$ then $\{\otimes _{n \in \mathbb{N}} f_{nm} \}_{m \in \mathbb{N}}$ is a lower semi-frame for $(\otimes _{n \in \mathbb{N}}H_n, \Omega_n).$

\end{remark}

\begin{theorem}\label{5.6}
Let $\{f_{nm}\}_{m \in \mathbb{N}}$ be a sequence in $H_n$ for all $n \in \mathbb{N}.$ If $\{\otimes _{n \in \mathbb{N}} f_{nm} \}_{m \in \mathbb{N}}$ is a frame for  $(\otimes _{n \in \mathbb{N}}H_n, \Omega_n)$, then $\{f_{nm}\}_{m \in \mathbb{N}}$ is a frame for $H_n$ for all ${n \in \mathbb{N}}.$
\end{theorem}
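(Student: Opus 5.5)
The plan is to reduce the frame inequality for $\{\otimes_{n\in\mathbb{N}} f_{nm}\}_{m\in\mathbb{N}}$ to a frame inequality in a single factor $H_k$. This is the same mechanism used in Theorems \ref{5.2} and \ref{5.4}, but without assuming that $f_{nm}=\Omega_n$ for $n\neq k$, since that cannot hold for every $k$ at once.

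Suppose $\{\otimes_{n} f_{nm}\}_m$ is a frame with bounds $0<A\le B<\infty$, and fix $k\in\mathbb{N}$. For $x\in H_k$ and an admissible family $y=(y_n)_{n\neq k}$, with $y_n=\Omega_n$ for all but finitely many $n$, put $X=x\otimes(\otimes_{n\neq k}y_n)$. Then $\|X\|=\|x\|\,\prod_{n\neq k}\|y_n\|$, and
$$\langle X,\otimes_n f_{nm}\rangle=\langle x,f_{km}\rangle\,c_m(y),\qquad c_m(y)=\prod_{n\neq k}\langle y_n,f_{nm}\rangle .$$
Substituting into the frame inequality gives the weighted two-sided estimate
$$A\|x\|^2\prod_{n\neq k}\|y_n\|^2\le\sum_{m\in\mathbb{N}}|c_m(y)|^2\,|\langle x,f_{km}\rangle|^2\le B\|x\|^2\prod_{n\neq k}\|y_n\|^2 .$$
Everything then rests on controlling the weights $c_m(y)$, independently of $m$, by a suitable choice of $y$.

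\textbf{Upper bound.} The goal is to choose $y$ so that $\inf_m|c_m(y)|\ge\delta>0$. Then the right-hand inequality yields the Bessel bound $B\prod_{n\ne k}\|y_n\|^2/\delta^2$ for $\{f_{km}\}_m$.

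\textbf{Lower bound.} Here one needs $\sup_m|c_m(y)|\le M<\infty$. By Cauchy--Schwarz this holds whenever $\sup_m\prod_{n\neq k}\|f_{nm}\|<\infty$. The left-hand inequality then gives the lower frame bound $A\prod\|y_n\|^2/M^2$.

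The natural first choice is $y_n=\Omega_n$ for every $n\neq k$. With this choice $c_m=\prod_{n\neq k}\langle\Omega_n,f_{nm}\rangle$, which is a finite product because each $f_{nm}$ coincides with $\Omega_n$ for almost all $n$.

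\textbf{Main obstacle.} The hard step is obtaining the uniform two-sided control $0<\delta\le|c_m(y)|\le M$. In full generality this is not available. The elementary tensor $\otimes_n f_{nm}$ is unchanged if $f_{km}$ is replaced by $\lambda_m f_{km}$ and some $f_{jm}$ ($j\neq k$) by $\lambda_m^{-1}f_{jm}$, with $\lambda_m\to\infty$. Such a rescaling leaves the frame $\{\otimes_n f_{nm}\}_m$ intact but destroys the Bessel property of $\{f_{km}\}_m$.

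So at this point I would make explicit the normalization implicit in the paper's setting. One option is to assume $\sup_m\prod_{n\neq k}\|f_{nm}\|<\infty$ together with $\inf_m\bigl|\prod_{n\neq k}\langle\Omega_n,f_{nm}\rangle\bigr|>0$. Another is to assume, as in the proofs of Theorems \ref{5.2} and \ref{5.4}, that the factors off $k$ are the reference vectors. Under such a hypothesis the two bounds above combine to show that $\{f_{km}\}_m$ is a frame for $H_k$, and since $k$ was arbitrary the result holds for all $n\in\mathbb{N}$.

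I expect the step of justifying the uniform bounds on $c_m$, or else isolating the precise hypothesis that guarantees them, to be the crux. Without it the argument, and the statement as literally worded, fails by the rescaling example.
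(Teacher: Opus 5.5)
Your diagnosis is correct, and the flaw you isolate is in the paper's argument, not in yours. The paper proves this theorem only by citing Theorems~\ref{5.2} and~\ref{5.4}. Their proofs use exactly your test vector with $y=\Omega$, namely $x\otimes(\otimes_{n\neq k}\Omega_n)$. They then evaluate its inner product with $\otimes_n f_{nm}$ as $\langle x,f_{km}\rangle$. In effect they replace $\otimes_n f_{nm}$ by $f_{km}\otimes(\otimes_{n\neq k}\Omega_n)$, which amounts to setting your weights $c_m(\Omega)$ equal to $1$.

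Your weighted two-sided inequality is the honest version of that computation. Your rescaling $f_{km}\mapsto\lambda_m f_{km}$, $f_{jm}\mapsto\lambda_m^{-1}f_{jm}$ changes only two coordinates, so each tensor stays admissible and is unchanged. It shows that the hypothesis does not control the weights, so the statement as worded is false and no route can prove it. To make that example airtight, start from an explicit frame, such as the one below, and choose $\lambda_m$ so that $\lambda_m\|f_{km}\|$ is unbounded, not merely $\lambda_m\to\infty$. Unbounded norms are what destroy the Bessel property, because every member of a Bessel sequence with bound $B$ satisfies $\|f\|^2\le B$.

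There is also a counterexample with no rescaling at all. It shows that the real issue is the shared index $m$, not normalization. Take $H_n=\mathbb{C}^2$ with orthonormal basis $\{e_0,e_1\}$ and $\Omega_n=e_0$. Let $\{\otimes_n f_{nm}\}_{m\in\mathbb{N}}$ enumerate the vectors $\otimes_n e_{i_n}$, where $(i_n)_{n\in\mathbb{N}}$ runs over the finitely supported $0$--$1$ sequences. This is an orthonormal basis of $(\otimes_n H_n,\Omega_n)$, hence a Parseval frame. Yet for each $k$ the vector $e_0$ occurs infinitely often in $\{f_{km}\}_m$, so $\sum_m|\langle e_0,f_{km}\rangle|^2=\infty$ and $\{f_{km}\}_m$ is not even Bessel. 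This also refutes Theorem~\ref{5.2}. Every vector here is a unit vector; what fails is that $c_m(\Omega)=0$ for infinitely many $m$.

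So of your two repairs, the operative one is $\inf_m|c_m(\Omega)|>0$ together with $\sup_m|c_m(\Omega)|<\infty$, imposed for every $k$; the second condition is implied by your norm condition. Under these conditions your argument with $y=\Omega$ is a complete proof. Your other repair, that the factors off $k$ equal $\Omega_n$, can serve only one $k$. Imposing it for two different values of $k$ forces $\otimes_n f_{nm}=\Omega$ for every $m$, and that constant sequence is never a frame.
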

\begin{proof}
    The proof of this theorem follows from Theorem \ref{5.2} and Theorem \ref{5.4}.
\end{proof}
\begin{remark}
For each $ n \in \mathbb{N}$, $\{f_{nm}\}_{m \in \mathbb{N}}$ is a Parseval frame for $H_n$ if and only if  $\{\otimes _{n \in \mathbb{N}} f_{nm} \}_{m \in \mathbb{N}}$ is a Parseval frame for  $(\otimes _{n \in \mathbb{N}}H_n, \Omega_n)$. 
\end{remark}
\begin{theorem}
    Let $\{f_{nm}\}_{m \in \mathbb{N}}$ be a sequence in $H_n$ for all $n \in \mathbb{N}.$ If $\{\otimes _{n \in \mathbb{N}} f_{nm} \}_{m \in \mathbb{N}}$ is a Riesz-basis for  $(\otimes _{n \in \mathbb{N}}H_n, \Omega_n)$, then $\{f_{nm}\}_{m \in \mathbb{N}}$ is a Riesz basis for $H_n$ for all ${n \in \mathbb{N}}.$
\end{theorem}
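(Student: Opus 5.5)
Since a Riesz basis is exactly a frame that is also a Riesz sequence (equivalently, a frame whose analysis operator is bijective onto $\ell^2$), the plan is to combine Theorem \ref{5.6} with a slicing argument for the Riesz-sequence inequality. That argument mirrors the proofs of Theorem \ref{5.2} and Theorem \ref{5.4}: fix a coordinate, say $n=1$, and work with vectors whose remaining factors are held fixed.

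\emph{Frame property.} By Theorem \ref{5.6}, if $\{\otimes_{n\in\mathbb{N}} f_{nm}\}_{m\in\mathbb{N}}$ is a Riesz basis, and hence a frame, for $(\otimes_{n\in\mathbb{N}}H_n,\Omega_n)$, then each $\{f_{nm}\}_{m\in\mathbb{N}}$ is a frame for $H_n$. It therefore remains to show that each $\{f_{1m}\}_{m}$ is $\omega$-independent, equivalently that it satisfies the lower Riesz (Riesz--Fischer) inequality. By Proposition \ref{1}(iii), this amounts to showing that its analysis operator $C_1$ is surjective.

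\emph{Riesz-sequence inequality.} For a finite scalar sequence $\{c_m\}$, I would compare $\|\sum_m c_m f_{1m}\|$ with $\|\sum_m c_m \otimes_n f_{nm}\|$. The tail factors $\otimes_{n\ge 2} f_{nm}$ depend on $m$, so they cannot simply be pulled out. Instead I would argue at the level of analysis operators. The analysis operator $C$ of the tensor sequence is a bijection onto $\ell^2(\mathbb{N})$. Fix a unit tail vector $u=\otimes_{n\ge2}\Omega_n$. Then, as in Theorem \ref{5.2}, the map $x\mapsto x\otimes u$ is an isometry $H_1\to\otimes_n H_n$, and
\[
C(x\otimes u)=\Bigl\{\langle x,f_{1m}\rangle\prod_{n\ge2}\langle \Omega_n,f_{nm}\rangle\Bigr\}_{m}.
\]
Hence $C_1$ factors through $C$ and a diagonal multiplication by $d_m=\prod_{n\ge2}\langle\Omega_n,f_{nm}\rangle$. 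In the paper's slicing convention, as in the proofs of Theorem \ref{5.2} and Theorem \ref{5.4}, the tail factors are identified with $\Omega_n$, so $d_m=1$. I would adopt that convention explicitly, which gives $C_1 x = C(x\otimes u)$. Surjectivity of $C_1$ then requires every $c\in\ell^2$ to be realized by a product vector of this form.

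\emph{Main obstacle.} A general preimage $C^{-1}c$ is not of the form $x\otimes u$, and this is the hard step. I would handle it using the Gram-matrix description instead: $\{f_{1m}\}$ is a Riesz basis if and only if it is a frame and its Gram matrix $G_1=[\langle f_{1k},f_{1m}\rangle]$ is bounded below on $\ell^2$. Under the same slicing identification, the Gram matrix of the tensor sequence is the entrywise product $G_1\circ G_{\ge2}$, and with the convention above this reduces to $G_1$. So the lower Riesz bound $A$ of the tensor sequence transfers directly:
\[
A\sum|c_m|^2\le\Bigl\|\sum c_m f_{1m}\otimes u\Bigr\|^2=\Bigl\|\sum c_m f_{1m}\Bigr\|^2 .
\]
Combined with the frame property and completeness (from the completeness theorem for infinite tensor products proved above), this shows $\{f_{1m}\}$ is a Riesz basis. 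The same argument applies to every $n$. The step I expect to need the most care is justifying the identification of the tail factors with $\Omega$, exactly the step used implicitly in Theorem \ref{5.2} and Theorem \ref{5.4}.
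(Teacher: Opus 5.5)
Your argument follows the paper's proof: slice at one coordinate with $\Omega_n$ in all other slots and transfer the Riesz bounds. The paper writes $\|\sum_m c_m f_{km}\|=\|\sum_m c_m f_{km}\otimes_{n\neq k}\Omega_n\|$ and applies the bounds $A,B$ of the tensor sequence to this vector. The step you call a \emph{convention} is exactly where the argument breaks. It is an unjustified extra hypothesis, not a convention. The Riesz bounds of the tensor sequence control the norm of
\[
\sum_m c_m \otimes_n f_{nm}=\sum_m c_m\, f_{1m}\otimes t_m,\qquad t_m=\otimes_{n\ge 2}f_{nm},
\]
not the norm of $(\sum_m c_m f_{1m})\otimes u$. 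In Gram-matrix terms, the tensor Gram matrix has entries $\langle f_{1k},f_{1m}\rangle\langle t_k,t_m\rangle$, and a lower bound for such a Schur product gives no lower bound for $G_1$.

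Moreover, $d_m=\langle u,t_m\rangle=1$ does not turn $G_{\ge 2}$ into the all-ones matrix. That requires $\langle t_k,t_m\rangle=1$ for all $k,m$, which by the equality case of Cauchy--Schwarz (together with $d_m=1$) forces $t_m=u$ for every $m$. Then the whole tensor sequence lies in the closed subspace $\{x\otimes u: x\in H_1\}$. This subspace is proper as soon as some $H_n$ with $n\ge 2$ has dimension at least $2$, so the sequence is not complete. Under your identification the hypothesis of the theorem therefore essentially never holds.

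The gap cannot be repaired, because the statement is false as written. Let $H_n=\mathbb{C}^2$ and $\Omega_n=e_1$. Enumerate the orthonormal basis $\{\otimes_n e_{i_n}: i_n\in\{1,2\},\ i_n=1 \text{ for all but finitely many } n\}$ of $(\otimes_n H_n,\Omega_n)$ as $\{\otimes_n f_{nm}\}_{m\in\mathbb{N}}$. This is a Riesz basis with $A=B=1$, yet for every $n$ we have $f_{nm}=e_1$ for infinitely many $m$. Choosing $k\neq l$ with $f_{nk}=f_{nl}=e_1$ and $c=\delta_k-\delta_l$ gives $\sum_j c_j f_{nj}=0$ while $\sum_j|c_j|^2=2$, which violates the lower Riesz inequality. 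Also $\sum_m|\langle e_1,f_{nm}\rangle|^2=\infty$, so $\{f_{nm}\}_m$ is not even Bessel.

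The failure of the Bessel property also rules out your frame step. Theorem \ref{5.6} rests on Theorem \ref{5.2}, whose proof drops the factor $d_m$ in the same way and whose conclusion fails in this example. The paper's own proof contains the same gap, so reproducing it does not yield a valid proof.
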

\begin{proof}
    Assume that $\{\otimes _{n \in \mathbb{N}} f_{nm} \}_{m \in \mathbb{N}}$ is a Riesz-basis for  $(\otimes _{n \in \mathbb{N}}H_n, \Omega_n)$, Then there exist $A$ and $B$ such that,
    $$ A\sum _{_{m \in \mathbb{N}}}\mid c_{m}\mid^2 \leq \sum _{_{m \in \mathbb{N}}}\mid c_{m} \otimes_{n \in \mathbb{N}}f_{nm}\mid^2 \leq B\sum _{_{m \in \mathbb{N}}}\mid c_{m}\mid^2,$$ for all finite sequence $\{c_{m}\}_{{m} \in \mathbb{N}} \in \ell^2(\mathbb{N)}$. Now, fix a $k \in \mathbb{N}$,  
    $$ A\sum _{_{m \in \mathbb{N}}}\mid c_{m}\mid^2 \leq\mid \sum _{k_m \in \mathbb{N}} c_{k_m}f_{km} \mid^2 = \mid \sum _{m \in \mathbb{N}} c_{k_m}f_{km} \otimes _{n \neq k \in \mathbb{N}} \Omega_n\mid^2 
    \leq  B\sum _{_{m \in \mathbb{N}}}\mid c_{m}\mid^2.$$ Therefore $\{f_{km}\}_{k,m \in \mathbb{N}}$ is a Riesz Basis for $H_k$. Similarly, we can prove for all $k \in \mathbb{N}.$ 
\end{proof}
\begin{theorem}
    Let $\{f_{nm}\}_{m \in \mathbb{N}}$ be a sequence in $H_n$ for all $n \in \mathbb{N}.$ If $\{\otimes _{n \in \mathbb{N}} f_{nm} \}_{m \in \mathbb{N}}$ is a Riesz Fischer sequence for  $(\otimes _{n \in \mathbb{N}}H_n, \Omega_n)$, then $\{f_{nm}\}_{m \in \mathbb{N}}$ is a Riesz Fischer sequence for $H_n$ for all ${n \in \mathbb{N}}.$
\end{theorem}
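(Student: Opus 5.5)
Fix $k \in \mathbb{N}$. The plan is to embed $H_k$ into $(\otimes_{n \in \mathbb{N}}H_n, \Omega_n)$ by $x \mapsto x \otimes_{n \neq k} \Omega_n$ and pull the Riesz--Fischer inequality back along this map, as in the proofs of Theorem \ref{5.2} and Theorem \ref{5.4}. The paper uses these slices with the reference vectors in the other slots, so I will read the slice of the $m$-th tensor at position $k$ as the vector
\[
F_{km} := f_{km} \otimes_{n \neq k} \Omega_n .
\]
This is legitimate in the von Neumann product when the factors $f_{nm}$, $n \neq k$, agree with $\Omega_n$ (or at least have inner product $1$ with them). The Riesz--Fischer hypothesis then says that there is $A>0$ with
\[
A\sum_{m}|c_m|^2 \le \Big\|\sum_m c_m \otimes_{n} f_{nm}\Big\|^2
\]
for all finite sequences $\{c_m\}$.

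The first step is to check that the embedding $J_k : H_k \to \otimes_{n}(H_n,\Omega_n)$, $J_k x = x \otimes_{n \neq k}\Omega_n$, is an isometry. This follows from the defining formula of the inner product, since
\[
\langle J_k x, J_k y\rangle = \langle x,y\rangle \prod_{n\neq k}\|\Omega_n\|^2 = \langle x,y\rangle .
\]
The second step is to use linearity. For a finite sequence $\{c_m\}$ we have $\sum_m c_m J_k f_{km} = J_k\big(\sum_m c_m f_{km}\big)$, and therefore
\[
\Big\|\sum_m c_m f_{km}\Big\|^2_{H_k} = \Big\|\sum_m c_m f_{km}\otimes_{n\neq k}\Omega_n\Big\|^2 \ge A\sum_m |c_m|^2 .
\]
This is exactly the lower Riesz inequality for $\{f_{km}\}_{m}$. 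Since $k$ was arbitrary, the statement follows.

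An alternative, operator-theoretic route goes through Proposition \ref{1}(iii). Surjectivity of the analysis operator $C$ of the tensor sequence, tested on vectors of the form $J_k x$, gives $C(J_k x) = C_k x$, where $C_k$ is the analysis operator of $\{f_{km}\}_m$. The difficulty is that surjectivity of $C$ uses preimages in the whole product space, not only in $J_k(H_k)$. The fix is to compose with the adjoint $J_k^*$, which is the partial inner product against $\otimes_{n\neq k}\Omega_n$. One then checks that $\langle z, \otimes_n f_{nm}\rangle = \langle J_k^* z, f_{km}\rangle$ for all $z$, under the same slice identification. This gives $C = C_k J_k^*$ on the domain, and surjectivity of $C$ then transfers to surjectivity of $C_k$.

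The main obstacle is precisely this identification of $\otimes_n f_{nm}$ with its $k$-th slice $f_{km}\otimes_{n\neq k}\Omega_n$. It holds only when the off-$k$ factors are reference vectors, or more generally when $\prod_{n\neq k}\langle \Omega_n, f_{nm}\rangle$ is a nonzero constant $\kappa$ independent of $m$, in which case the inequality holds with $A/|\kappa|^2$. My first attempt would be to adopt the paper's convention that the $m$-th tensor differs from $\Omega$ only in the $k$-th slot when tested. If that is too restrictive, I would instead assume a uniform lower bound $|\prod_{n\neq k}\langle\Omega_n,f_{nm}\rangle|\ge \delta>0$. However, that bound does not directly give the lower inequality for arbitrary coefficients, so I expect to fall back on the slice convention.
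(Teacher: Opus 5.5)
The step you flag as the ``main obstacle'' is where the argument fails, and it cannot be repaired, because the statement is false as written. The vector $\otimes_n f_{nm}$ is determined by all of its factors. Nothing in the hypothesis lets you replace it by $J_k f_{km}=f_{km}\otimes_{n\neq k}\Omega_n$. The paper's own proof makes exactly this substitution without comment, so following it does not close the gap.

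For a counterexample, let $u\in H_1$ be a unit vector and $\{e_m\}_{m\in\mathbb{N}}$ an orthonormal sequence in $H_2$. Put $f_{1m}=u$, $f_{2m}=e_m$, and $f_{nm}=\Omega_n$ for $n\geq 3$. Each $\otimes_n f_{nm}$ differs from the reference sequence in at most two slots, and $\langle \otimes_n f_{nm},\otimes_n f_{nm'}\rangle=\langle u,u\rangle\langle e_m,e_{m'}\rangle=\delta_{mm'}$. So $\{\otimes_n f_{nm}\}_m$ is orthonormal, hence Riesz--Fischer with $A=1$. But $\{f_{1m}\}_m$ is a constant sequence, and $c=(1,-1,0,\dots)$ gives $\|\sum_m c_m f_{1m}\|=0$. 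So it is not Riesz--Fischer, and neither is $\{f_{nm}\}_m$ for $n\geq 3$. In a singly indexed tensor sequence, the Riesz--Fischer property can be carried entirely by one factor.

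Your proposed weakenings do not help either. Suppose $\prod_{n\neq k}\langle \Omega_n,f_{nm}\rangle=\kappa$ for all $m$. Then $J_k^*(\otimes_n f_{nm})=\bar\kappa f_{km}$, and since $J_k^*$ is a contraction this gives $|\kappa|\,\|\sum_m c_m f_{km}\|\leq \|\sum_m c_m\otimes_n f_{nm}\|$. That is an \emph{upper} bound on the slice, the wrong direction for a lower Riesz inequality.

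Concretely, take $\{e_m\}_{m\geq 0}$ orthonormal in $H_2$, $\Omega_2=e_0$, $f_{2m}=e_0+e_m$ for $m\in\mathbb{N}$, and the other factors as above. Every $f_{nm}$ with $n\neq 1$ has inner product $1$ with $\Omega_n$, so $\kappa=1$ and your parenthetical condition holds. Moreover $\|\sum_m c_m\otimes_n f_{nm}\|^2=|\sum_m c_m|^2+\sum_m|c_m|^2\geq \sum_m |c_m|^2$, yet $\{f_{1m}\}_m$ is again constant.

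What your isometry argument, and the factorization $C=C_kJ_k^*$, genuinely prove is a special case: $f_{nm}=\Omega_n$ for all $m$ and all $n\neq k$, with the conclusion only for that single $k$. Imposing this for two different values of $k$ forces $\otimes_n f_{nm}=\otimes_n\Omega_n$ for every $m$. That constant sequence is never Riesz--Fischer, so the ``for all $n$'' conclusion becomes vacuous. A correct result has to change the hypotheses, for instance to that single-slot situation; a different proof alone cannot rescue it.
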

\begin{proof}
    Assume that $\{\otimes _{n \in \mathbb{N}} f_{nm} \}_{m \in \mathbb{N}}$ is a Riesz Fischer sequence for  $(\otimes _{n \in \mathbb{N}}H_n, \Omega_n)$, Then there exists $A$ such that,
    $$ A\sum _{_{m \in \mathbb{N}}}\mid c_{m}\mid^2 \leq \sum _{_{m \in \mathbb{N}}}\mid c_{m} \otimes_{n \in \mathbb{N}}f_{nm}\mid^2,$$ for all finite sequences $\{c_{m}\}_{{m} \in \mathbb{N}} \in \ell^2(\mathbb{N)}$. Now, fix a $k \in \mathbb{N}$,
    $$ A\sum _{_{m \in \mathbb{N}}}\mid c_{m}\mid^2 \leq\mid \sum _{m \in \mathbb{N}} c_{k_m}f_{km} \mid^2 = \mid \sum _{m \in \mathbb{N}} c_{m}f_{km} \otimes _{n \neq k \in \mathbb{N}} \Omega_n\mid^2 
    .$$ Therefore $\{f_{km}\}_{m \in \mathbb{N}}$ is a Riesz Fischer sequence for $H_k$. Similarly, we can prove for all $k \in \mathbb{N}.$ 
\end{proof}
\begin{center}
    \textbf{CONCLUSION}
\end{center}
We establish a unified operator-theoretic characterization of frame-related sequences in tensor products of separable Hilbert spaces, providing necessary and sufficient conditions for the preservation of completeness, Bessel, lower semi-frame, and Riesz–Fischer properties. These results show that tensor sequences inherit such properties precisely when the corresponding component sequences do, and we further derive preservation criteria under tensor products and tensor sums of densely defined operators. For von Neumann infinite tensor products, we identify situations where these properties persist and exhibit counterexamples demonstrating that additional uniformity assumptions are essential. The findings extend classical tensor frame theory and lay groundwork for future studies of generalized frame structures in composite Hilbert spaces.
\begin{center}
	\textbf{Acknowledgements}
\end{center}

\noindent The author would like to sincerely thank my guide Prof. P. Sam Johnson for his valuable guidance and constructive comments, which greatly helped improve the quality of this paper. The author gratefully acknowledges the financial support from the National Institute of Technology Karnataka (NITK), Surathkal.
\addcontentsline{toc}{section}{References}

\begin{thebibliography}{10}
	
	
	
	\bibitem{1}
	Antoine Jean-Pierre and Balazs Peter.
	\newblock Frames and semi-frames.
	\newblock {\em Journal of Physics A: Mathematical and Theoretical},
	44(20):205201, 2011.
	\bibitem{2}
	Antoine Jean-Pierre and Balazs Peter.
	\newblock Frames, semi-frames, and Hilbert scales.
	\newblock {\em Numerical Functional Analysis and Optimization},
	33(7-9):736--769, 2012.
	
	\bibitem{2011}
	Balazs Peter, Diana~T Stoeva, and Jean-Pierre Antoine.
	\newblock Classification of general sequences by frame-related operators.
	\newblock {\em Sampling Theory in Signal and Image Processing}, 10:151--170,
	2011.
	
	\bibitem{2002}
	Casazza Peter G, Christensen Ole, Li Shidong, and Alexander~M Lindner.
	\newblock Riesz-fischer sequences and lower frame bounds.
	\newblock {\em Zeitschrift f{\"u}r Analysis und ihre Anwendungen},
	21(2):305--314, 2002.
	
	\bibitem{7}
	Christensen Ole.
	\newblock {\em An introduction to frames and Riesz bases}, volume~7.
	\newblock Springer, 2003.
	
	\bibitem{Dsum}
	Deguang Han and David~R. Larson.
	\newblock Frames, bases and group representations.
	\newblock {\em Mem. Amer. Math. Soc.}, 147(697):x+94, 2000.
    
         
	\bibitem{kato}
	Kato Tosio.
	\newblock {\em Perturbation theory for linear operators}, volume 132.
	\newblock Springer Science \& Business Media, 2013.
    
	\bibitem{6}
     Koo Yoo Young and Lim Jae~Kun.
	\newblock Existence of {P}arseval oblique duals of a frame sequence.
	\newblock {\em J. Math. Anal. Appl.}, 404(2):470--476, 2013.
    
	
\bibitem{tensor}
     Kubrusly, Carlos S. 
     \newblock A concise introduction to tensor product.
     \newblock{ \em Far East Journal of Mathematical Sciences}, 22(2):137-1-4, 2006.    
    \bibitem{4a}
    Kulkarni, Sudhir H and M Thamban Nair, and  Golla Ramesh
    \newblock Some properties of unbounded operators with closed range.
    \newblock{\em Proceedings Mathematical Sciences, Springer}, 118:613--625, 2008.
  \bibitem{hema}
  Mohanarangan H., P. Sam Johnson, and Harikrishanan P.K.
  \newblock Direct sum of lower semi-frames in Hilbert spaces. 
  \newblock{\em Gulf Journal of Mathematics}, 19(2), 412-423, 2025 
	 \bibitem{infinite}
    Von Neumann, J. 
    \newblock On Infinite Direct Products. 
    \newblock{\em Compositio Mathematica}, 6: 1–77, 1938.
    
	\bibitem{Wiedmann}
     Weidmann, Joachim.
    \newblock Linear operators in Hilbert spaces. Vol. 68. 
    \newblock{Springer Science and Business Media}, 2012.
    \bibitem{tensor dual}
    Wang, Ya-Hui and Yun-Zhang Li.
    \newblock Tensor product dual frames.
    \newblock{ \em Journal of Inequalities and Applications}, 2019(1) :76, 2019.
    
\end{thebibliography}

\end{document}